\documentclass{amsart}

\usepackage{amssymb,amsmath,enumerate}
\usepackage[colorlinks]{hyperref}

\usepackage[all]{xy}
\SelectTips{cm}{}

\newcommand{\da}[1]{\!\!\downarrow_{#1}}

\newcommand{\lra}{\longrightarrow}

\newcommand{\xla}{\xleftarrow}
\newcommand{\xra}{\xrightarrow}
\newcommand{\comesfrom}{{\leftarrow\hspace{-7pt}{\scriptstyle{\dashv}}}\,}

\newcommand{\ges}{\geqslant}

\newcommand{\dd}{\partial}

\newcommand\bsa{{\boldsymbol a}}

\newcommand\bsh{{\boldsymbol h}}

\newcommand\bsj{{\boldsymbol j}}

\newcommand\bss{{\boldsymbol s}}
\newcommand\bst{{\boldsymbol t}}
\newcommand\bsx{{\boldsymbol x}}
\newcommand{\BA}{{\mathbb A}}

\newcommand{\chr}{\operatorname{char}}
\newcommand{\Coker}{\operatorname{Coker}}

\newcommand{\depth}{\operatorname{depth}}

\newcommand{\eps}{\varepsilon}
\newcommand{\mcV}{\mathcal V}

\newcommand{\Tor}[4]{\operatorname{Tor}_{#1}^{#2}(#3,#4)}

\newcommand{\fm}{{\mathfrak m}}
\newcommand{\fn}{{\mathfrak n}}
\newcommand{\fp}{{\mathfrak p}}
\newcommand{\fq}{{\mathfrak q}}

\newcommand{\hh}[1]{\operatorname{H}(#1)}
\newcommand{\HH}[2]{\operatorname{H}_{#1}(#2)}

\newcommand\Image{\operatorname{Im}}

\newcommand\Ker{\operatorname{Ker}}

\newcommand{\ov}{\overline}

\newcommand{\wt}{\widetilde}

\newcommand{\fdim}{\operatorname{flat\,dim}}
\newcommand{\pdim}{\operatorname{proj\,dim}}
\newcommand{\rank}{\operatorname{rank}}
\newcommand{\rvar}[1]{V^{r}_{\!#1}}
\newcommand{\var}[1]{V_{\!#1}}
\newcommand{\shift}{{\sf\Sigma}}
\newcommand\Spec{\operatorname{Spec}}


\newtheorem{theorem}[subsection]{Theorem}
\newtheorem{proposition}[subsection]{Proposition}
\newtheorem{lemma}[subsection]{Lemma}
\newtheorem{corollary}[subsection]{Corollary}

\theoremstyle{definition}
\newtheorem{example}[subsection]{Example}

\theoremstyle{remark}
\newtheorem{remark}[subsection]{Remark}
\newtheorem{claim}{Claim}

\setcounter{secnumdepth}{2}

\numberwithin{equation}{subsection}

\begin{document}

\title[Restricting homology to hypersurfaces]{Restricting homology to hypersurfaces}

\author[L.~L.~Avramov]{Luchezar L.~Avramov}
\address{Department of Mathematics,
University of Nebraska, Lincoln, NE 68588, U.S.A.}
\email{avramov@math.unl.edu}

\author[S.~B.~Iyengar]{Srikanth B.~Iyengar}
\address{Department of Mathematics,
University of Utah, Salt Lake City, UT 84112, U.S.A.}
\email{iyengar@math.utah.edu}

\thanks{Partly supported by NSF grants DMS-1103176 (LLA) and DMS-1700985 (SBI). We are grateful to Chris Drupieski for comments on earlier versions of this manuscript.}

\date{\today}

\keywords{elementary abelian group, complete intersection, rank variety, support set}
\subjclass[2010]{13D07 (primary); 16E45, 13D02,  13D40  (secondary)}

\begin{abstract} 
This paper concerns the homological properties of a module $M$ over a ring $R$  relative to a presentation $R\cong P/I$, where $P$ is local ring. It is proved that the Betti sequence of $M$ with respect to $P/(f)$ for a regular element $f$ in $I$ depends only on the class of $f$ in $I/\mathfrak{n} I$, where $\mathfrak{n}$ is the maximal ideal of $P$. Applications to the theory of supports sets in local algebra and in the modular representation theory of elementary abelian groups are presented.
\end{abstract}

\dedicatory{To Dave Benson on the occasion of his 60th birthdays.}

\maketitle
   
\section*{Introduction}
\label{sec:intro}
This work concerns an analogue for commutative rings of Carlson's theory of rank varieties for elementary abelian groups~\cite{Ca}.
It takes the following form: Given a (noetherian, commutative) local ring $R$ that is a quotient of a local ring $P$, so that $R:=P/I$ for an ideal $I$, the goal is to study an $R$-module $M$ by its restrictions to hypersurfaces $P/(f)$ where $f\in I$ is a regular element (that is to say, not a zero divisor). The rationale is that homological algebra over such hypersurfaces is well-understood, especially when $P$ is a regular ring; then one can, for example, take recourse to the theory of matrix factorizations initiated by Eisenbud~\cite{Ei}.

In this endeavor a basic question is how the properties of $M$ change as we vary the element $f$. The result below, contained in Theorem~\ref{thm:hypersurfaces}, addresses this issue.

\medskip

\emph{
If $f,g$ in $I$ are regular elements with $f-g$ in $\fn I$, where $\fn$ is the maximal ideal of $P$, then for any $R$-module $M$ there are isomorphisms of $k$-vector spaces}
\[
\Tor{i}{P/(f)}k{M} \cong \Tor{i} {P/(g)}k{M} \quad\text{for each $i\ge 0$.}
\]
It follows that when $M$ is finitely generated, its projective dimension as a module over $P/(f)$ and over $P/(g)$ are simultaneously finite. This latter result was proved by Avramov~\cite{Av:vpd} when $P$ is regular and $I$ is generated by a regular sequence; it is part of the theory of cohomological support varieties for modules over complete intersections~\cite{AB}. Jorgensen~\cite{Jo} generalized this to any ideal $I$ in a domain $P$.

The isomorphism above yields more: \emph{the Betti numbers of  $M$ over $P/(f)$ and over $P/(g)$ are equal}.  Moreover the statement carries over to the context of graded rings and implies that the graded Betti numbers and hence also invariants derived from them, like  regularity, are equal.

That said, the motivation for writing this manuscript is not so much the greater generality of the result; rather, it is to give an alternative point of view---one that lays bare the structural reason behind the numerical coincidences. The proof, given in Section~\ref{sec:Hypersurfaces}, uses a modicum of Differential Graded (henceforth abbreviated to DG) homological algebra, recapped in Section~\ref{sec:DGA}.

As one application of the theorem, in Section~\ref{sec:groups} we describe how to deduce from it certain results of Carlson~\cite{Ca}, Friedlander and Pevtsova~\cite{FP}, and Suslin~\cite{Su}, that underlie the theory of rank varieties for finite groups and group schemes.

The theorem above also leads to the notion of a support set of $M$ with respect to the homomorphism $\pi\colon P\to R$, denoted $\mcV_{\pi}(M)$. It is a subset of $k^{c}$, where $c$ is the minimal number of elements required to generate the ideal $I$. We prove:

\medskip

\emph{
When $I$ contains a regular element and $M$ is finitely generated, the subset $\mcV_{\pi}(M)\subseteq {k}^{c}$ is closed in the Zariski topology.
}

\medskip

This result is contained in Theorem~\ref{thm:closed} which identifies the support set of $M$ as the algebraic set defined by the annihilator of a certain module, over a polynomial ring $k[s_{1},\dots,s_{c}]$, naturally associated to $M$.  Up to radical this annihilator ideal can be described explicitly, as we illustrate in Section~\ref{sec:equations}. 
 
\section{Differential graded algebra}
\label{sec:DGA}
 In this section we recall what little is needed, by way of constructs and results concerning DG algebras and DG modules. Our reference for this material is \cite{Av:barca}.

Let $P$ be a commutative ring and $A$  a DG $P$-algebra; it will be implicit that $A$ is graded-commutative and satisfies $A_{i}=0$ for $i<0$. 

\subsection{Tensor products}
  \label{ssec:DSAs}
When $A$ and $B$ are DG $P$-algebras, so is $A\otimes_PB$, with standard
differential and products 
\[
(a\otimes b)(a'\otimes b')=(-1)^{|a'||b|}aa'\otimes bb'\,.
\]
We identify $A$ and $B$ with their images in $A\otimes_P B$ and write $ab$ 
instead of $a\otimes b$. 

\subsection{Divided powers}
  \label{ssec:divided}
For $w\in A_{2d}$ with $d\ge1$, we say that $(w^{(i)}\in A_{2di})_{i\ges 0}$ is 
a \emph{sequence of divided powers} of $w$ if $w^{(0)}=1$, $w^{(1)}=w$, and there are equalities
  \[
w^{(i)} w^{(j)} = \frac{(i+j)!}{i! j!}\, w^{(i+j)}
   \quad\text{and}\quad
\dd(w^{(i)}) = \dd(w) w^{(i-1)} \quad\text{for all}\quad i,j\ge 0\,.
  \]

Induction on $i$ yields $w^i=i!w^{(i)}$ for $i\ge0$. Thus $w$ has a unique sequence of divided powers 
if $\chr(k)=0$, or if $\chr(k)=p>0$ and $A_j=0$ for $j\ge 2dp-1$.  However, not every element of even 
positive degree has divided powers in general.

If $v$ and $w$ have divided powers, then so do $aw$, for $a\in P$, and $v+w$ with
\[
(a w)^{i} = a^{i}w^{(i)}\quad \text{and}\quad (v+w)^{(h)} = \sum_{i+j=h}v^{(i)}w^{(j)}\,.
\]

\subsection{The Koszul complex}
\label{ssec:koszul}
Let $t_{1},\dots,t_{n}$ be elements in $P$ and $K$ the Koszul complex on $\bst$. Thus $K$ is a DG $P$-algebra with underlying graded algebra the exterior algebra $\bigwedge_{P}K_{1}$, where $K_{1}$ is a free $P$-module with basis $x_1,\dots,x_n$, and differential $\dd$ defined by the condition $\dd(x_i)=t_i$.  Then $\HH 0K=P/(\bst)$,  so $K$ comes equipped with a canonical morphism of DG $P$-algebras  $\varepsilon\colon K\to P/(\bst)$.  

Every element of $K_{2d}$ with $d\ge1$ has a sequence of divided powers.  Only those for $d=1$ are needed here, and we proceed to define them ad hoc.

Write each $w\in K_{2}$ as $w=\sum_{1\le a< b\le n}r_{a,b}\,x_ax_b$ with $r_{a,b}\in P$ and set
  \[
w^{(i)}:= \sum_{\substack{{1\le a_j<b_j\le n} \\ {1 \le j\le i}}} 
(r_{a_1,b_1}\cdots r_{a_i,b_i})\,x_{a_1}x_{b_1}\cdots x_{a_i}x_{b_i} \quad\text{for}\quad i\ge 1\,.
   \]
The right-hand side is well defined because the expression of $w$ is unique.  A direct 
computations shows that $\dd(w^{(i)}) = \dd(w) w^{(i-1)} $ holds for $i\ge0$.

\subsection{The Tate construction}
\label{ssec:tate}
Let $P\langle y\rangle$ be a graded free $P$-module with basis $\{y^{(i)}\}_{i\ges 0}$, where $|y^{(i)}|=2i$,  
and define a $P$-linear multiplication table by
  \[
y^{(0)}=1
\quad\text{and}\quad
y^{(i)} y^{(j)} = \frac{(i+j)!}{i! j!}\, y^{(i+j)} 
\quad \text{for all}\quad i,j\ge 0\,.
  \]
By construction, the element $y$ has a sequence of divided powers, namely, $\{y^{(i)}\}$.

Let  $A$ be a DG $P$-algebra and $z\in A_{1}$ a cycle. We write $A\langle y\mid \dd(y)=z\rangle$ for the DG $P$-algebra with underlying graded algebra $A\otimes_{P} P\langle y\rangle$, with differential extending the one on $A$ and satisfying
\[
\dd(y^{(i)}) = zy^{(i-1)} \quad \text{for all $i\ge 1$}
\]
We abbreviate $A\langle y\mid \dd(y)=z\rangle$ to $A\langle y\rangle$ if the differential on $y$ is clear, or irrelevant.  

If an element $a\in A_{2}$ has a sequence of divided powers, then so do elements of the $P$-submodule of $A\langle y\rangle$ generated by $a$ and $y$; see \ref{ssec:divided}.

The map $A\to A\langle y\rangle $ assigning $a\in A$ to $ay^{(0)}$ is one-to-one and a morphism of DG $P$-algebras. For $i\ge 0$, it induces a homomorphism 
\[
\HH i{A}\lra \HH i{A\langle y\rangle}
\]
that is bijective for $i=0$ and surjective for $i=1$, with kernel the ideal generated by the class of $z$. The result below and its proof are standard; see \cite[\S6]{Av:barca}. Details of the proof are given for ease of reference. 

\begin{lemma}
\label{lem:acyclic}
Let $z$ be a cycle in $A_{1}$.
\begin{enumerate}[\quad\rm(1)]
\item
If an element $w\in A_{2}$ has a sequence of divided powers, then the DG $P$-algebras $A\langle y\mid \dd(y)=z\rangle$ and $A\langle y\mid \dd(y)=z+\dd(w)\rangle$ are isomorphic.
\item
If the class of $z$ is a basis for the $\HH 0A$-module $\HH 1A$ and $\HH iA=0$ for $i\ge 2$, then the canonical map
\[
A\langle y\mid \dd(y)=z\rangle \lra \HH 0A
\]
is a quasi-isomorphism.
\end{enumerate}
\end{lemma}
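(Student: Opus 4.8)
The plan is to prove the two parts by direct manipulation of the DG algebra structures, exactly as one does for Tate's construction of a minimal resolution. For part (1), I would exhibit an explicit isomorphism $\Phi\colon A\langle y\mid\dd(y)=z\rangle\to A\langle y\mid\dd(y)=z+\dd(w)\rangle$ that is the identity on $A$ and sends the new variable $y$ to an element built from $y$ and $w$. The natural guess is $\Phi(y)=y+w$, or more precisely $\Phi(y^{(i)})=(y+w)^{(i)}=\sum_{j+\ell=i}y^{(j)}w^{(\ell)}$, which makes sense precisely because $w$ has a sequence of divided powers, so by \ref{ssec:divided} (the formula for $(v+w)^{(h)}$) the element $y+w$ in $A\langle y\rangle$ has divided powers too. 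I would then check three things: that $\Phi$ is multiplicative (this reduces to the identity $(y+w)^{(i)}(y+w)^{(j)}=\binom{i+j}{i}(y+w)^{(i+j)}$, again from \ref{ssec:divided}); that $\Phi$ is bijective (its inverse is $y\mapsto y-w$, with divided powers $(y-w)^{(i)}=\sum_{j+\ell=i}(-1)^{\ell}y^{(j)}w^{(\ell)}$); and, the crucial point, that $\Phi$ commutes with the differentials. For the last, compute on the source $\dd(y^{(i)})=zy^{(i-1)}$, so $\Phi(\dd(y^{(i)}))=z\,(y+w)^{(i-1)}$, while on the target $\dd(\Phi(y^{(i)}))=\dd\big((y+w)^{(i)}\big)=\dd(y+w)\,(y+w)^{(i-1)}=(z+\dd(w))(y+w)^{(i-1)}$. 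These agree iff $\dd(w)(y+w)^{(i-1)}=0$ modulo what $z$ already contributes — so in fact the differential on the target variable must be taken to be $z$, not $z+\dd(w)$...

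Let me restate that more carefully. The point is that $\dd(w)$ is a boundary, hence a cycle, but it need not be zero; the isomorphism is only an isomorphism of DG algebras when we track the differential correctly. The clean way is: in $A\langle y\mid\dd(y)=z+\dd(w)\rangle$ set $y'=y-w$; then $y'$ has divided powers (as above), and $\dd(y')=\dd(y)-\dd(w)=(z+\dd(w))-\dd(w)=z$, so the sub-DG-algebra $A\langle y'\rangle$ it generates satisfies $\dd(y')=z$. Since $A\langle y\rangle$ is free over $A$ on the divided powers of $y$, and the $(y')^{(i)}=(y-w)^{(i)}$ form another $A$-basis (the change of basis being unitriangular in the $w$-filtration), the inclusion $A\langle y'\mid\dd(y')=z\rangle\hookrightarrow A\langle y\mid\dd(y)=z+\dd(w)\rangle$ is an isomorphism of graded $A$-algebras, and it is compatible with differentials by the computation just given. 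Composing with the tautological identification of $A\langle y'\mid\dd(y')=z\rangle$ with $A\langle y\mid\dd(y)=z\rangle$ gives the desired isomorphism. The main obstacle here is purely bookkeeping: verifying that $\{(y-w)^{(i)}\}_{i\ge0}$ really is an $A$-module basis and that the divided-power and differential identities transfer — nothing deep, but it must be done with the sign conventions of \ref{ssec:DSAs} and the divided-power axioms of \ref{ssec:divided} in hand.

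For part (2), I would argue that $A\langle y\mid\dd(y)=z\rangle\to\HH0A$ is a quasi-isomorphism by computing homology degree by degree. Write $B=A\langle y\rangle$. In degree $0$ nothing changes, $\HH0B=\HH0A$, and the map is an isomorphism there. For the vanishing of $\HH iB$ for $i\ge1$: filter $B$ by powers of $y$, i.e. by the $A$-submodules $F_p=\bigoplus_{i\le p}A\,y^{(i)}$, or equivalently use the fact that $B$ is the total complex of the sequence $A\xrightarrow{z}\shift A\xrightarrow{z}\shift^2 A\to\cdots$ with the $y^{(i)}$ shifting degrees by $2i$ — more precisely, $B$ is built from a Koszul-type complex on the single element $z$, iterated with divided powers. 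Concretely, one shows: a cycle in $B_i$ for $i\ge1$ can be written as $\sum_{j\ge0}a_j y^{(j)}$ with $a_j\in A_{i-2j}$; applying $\dd$ and using $\dd(y^{(j)})=zy^{(j-1)}$, the cycle condition becomes a system $\dd(a_j)\pm za_{j+1}=0$. Since multiplication by the cycle $z$ induces the zero map on $\HH{\ge0}A$ modulo the ideal $(z)$... Actually the cleanest route uses the hypotheses directly: because $\HH1A$ is free of rank one on $[z]$ and $\HH{\ge2}A=0$, the complex $A$ together with the new variable killing exactly the class $[z]$ and adding nothing in higher homology behaves like adjoining a polynomial variable that resolves $A$ down to $\HH0A$. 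I would make this precise with the long exact sequence coming from the filtration $0\to A\to B\to \shift^2 B\to 0$ (multiplication by $y$ induces $\shift^2 B\simeq B/A$ up to a degree shift), or by a spectral-sequence argument whose $E_1$ page is $\HH{}A$ with differential multiplication by $[z]$: this $E_1$ is $\HH0A$ in internal degree $0$, $\HH0A\cdot[z]\cong\HH1A$ in the next slot, and zero elsewhere, and the $d_1$ differential is an isomorphism between the two nonzero columns above degree $0$, wiping out everything but $\HH0A$ in total degree $0$. The spectral sequence collapses at $E_2$ with $E_2=\HH0A$ concentrated in degree $0$, which is the claim. The main obstacle in part (2) is identifying the $d_1$ differential correctly as multiplication by the class of $z$ and confirming that the hypotheses force it to be injective where it must be; the divided-power structure is what guarantees freeness of $B$ over $A$ (so that the filtration is well behaved and the spectral sequence converges), and this is where \ref{ssec:tate} and the remark there about divided powers passing to the submodule generated by $a$ and $y$ get used.
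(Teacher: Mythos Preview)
Your proposal is correct and matches the paper's approach in both parts. For (1), the paper writes down exactly the $A$-linear map $y^{(i)}\mapsto (y+w)^{(i)}$ with inverse $y^{(i)}\mapsto (y-w)^{(i)}$; your second, cleaner pass (setting $y'=y-w$ in the target and checking $\dd(y')=z$) is this same isomorphism read in the other direction. For (2), the paper uses precisely the short exact sequence $0\to A\to B\to\shift^2 B\to 0$ you name, then unwinds the long exact sequence in homology and concludes by induction.

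One correction: the surjection $B\to\shift^2 B$ is \emph{not} induced by multiplication by $y$. Multiplication by $y$ sends $y^{(i)}$ to $(i+1)y^{(i+1)}$, which in positive characteristic has kernel strictly larger than $A$ (e.g.\ $y\cdot y^{(p-1)}=0$). The correct map is the $A$-linear contraction $\Theta\colon y^{(i)}\mapsto y^{(i-1)}$, which is genuinely surjective with kernel $A$ in all characteristics. With that fix your long-exact-sequence route is identical to the paper's. Your spectral-sequence alternative (filter by $y$-degree, identify $d_1$ as multiplication by $[z]$, collapse at $E_2$) is also valid and is a standard repackaging of the same computation; it trades an explicit induction for a bit more machinery.
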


\begin{proof}
It is readily verified that the $A$-linear map
\[
A\langle y\mid \dd(y)=z\rangle \lra A\langle y\mid \dd(y)=z+\dd(w)\rangle
\]
that assigns $y^{(i)}$ to $(y+w)^{(i)}$ is a morphism of DG $P$-algebras; it is bijective, with inverse the $A$-linear map that assigns 
$y^{(i)}$ to $(y-w)^{(i)}$. This justifies (1).

The $A$-linear map $\Theta\colon A\langle y\rangle \to \shift^{2}A\langle y\rangle$  assigning $y^{(i)}$ to $y^{(i-1)}$ is a morphism of DG $A$-modules, with kernel $A$, identified as the DG $A$-submodule $Ay^{(0)}$ of $A\langle y\rangle$. Thus, there is an exact sequence of DG $A$-modules
\[
0\lra A\lra A\langle y\rangle \xra{\ \Theta\ } \shift^{2} A\langle y\rangle\lra 0\,.
\]
In homology, this yields that $\HH 0{A\langle y\rangle}\cong \HH 0{A}$,  an exact sequence
\[
0\lra \HH 2{A\langle y\rangle}\lra \HH 0{A\langle y\rangle}\xra{\ \eth\ }  \HH 1A \lra \HH 1{A\langle y\rangle}\lra 0
\]
 of $\HH 0A$-modules where $\eth$ maps the class of $1$ to the class of $z$, and isomorphisms
\[
\HH i{A\langle y\rangle}\cong \HH {i-2}{A\langle y\rangle}\quad\text{for $i\ge 3$.}
\]
The hypothesis yields that $\eth$ is an isomorphism, so $\HH 2{A\langle y\rangle} = 0 = \HH 1{A\langle y\rangle}$ and then the isomorphisms above entail $\HH i{A\langle y\rangle}=0$ for all $i\ge 1$.
\end{proof}

\subsection{Acyclic closures}
\label{ssec:closures}
Let $\bst$ be a finite set of elements generating an ideal $\fn$ of $P$ and  $K$ the Koszul complex on $\bst$.  Let $z_{1},\dots,z_{c}$ be cycles in $K_{1}$ that generate the $\HH 0K$-module $\HH 1K$. Iterating the construction from \ref{ssec:tate} yields a DG $P$-algebra
\[
T:=K\langle y_{1},\dots,y_{c}\mid \dd(y_{i})=z_{i}\rangle\,.
\]
By construction $\HH 0T=P/\fn$ and $\HH 1T=0$. A repetition of this procedure, involving also the adjunction of exterior variables leads to a DG $P$-algebra $A$ containing $T$ as a DG subalgebra and satisfying $\HH iA=P/\fn$ and $\HH iA=0$ for $i\ge 1$. This is an \emph{acyclic closure} of $P/\fn$; see \cite[Chapter 6]{Av:barca}.

As noted in \ref{ssec:koszul},  in the DG $P$-algebra $K$ each element of even positive degree has a system of divided powers; the ones on $K_{2}$ were described explicitly. These induce a sequence of divided powers on the elements of degree two in $T$, see \ref{ssec:tate}, and hence also in $A$.

\section{Hypersurfaces}
\label{sec:Hypersurfaces}
Let $P$ be a commutative noetherian ring, $\fn$ a maximal ideal of $P$; set $k=P/\fn$.  We fix an ideal $I$ of $P$ contained in $\fn$ and set $R=P/I$. For every $f\in I$, let 
\[
\pi_{f}\colon  P/{(f)} \lra R
\]
be the canonical  homomorphism of rings. 

\begin{theorem}
\label{thm:hypersurfaces}
Let $I\subset P$ be an ideal containing a regular element, set $R:=P/I$ and let $M$ be an $R$-complex.
\begin{enumerate}[\quad\rm(1)]
\item
For each $f\in I$ there exists an $h\in \fn I$ such that $f+h$ is $P$-regular.
\item
Let $f$ and $g$ be regular elements lying in $I$.  If $f-g$ is in $\fn I$, then there is an isomorphism of graded $k$-vector spaces
\[
\Tor{}{P/(f)}k{M} \cong \Tor{} {P/(g)}k{M} \,.
\]
\item
If $f$ is a regular element lying in $\fn I$, then there is an isomorphism of graded $k$-vector spaces
\[
\Tor{}{P/(f)}k{M} \cong \Tor{}PkM \otimes_{k}k\langle y\rangle\,,\quad\text{with $|y|=2$.}
\]
\end{enumerate}
\end{theorem}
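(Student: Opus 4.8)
The plan is to prove (1) by prime avoidance, and to get (2) and (3) from a single computation built on an acyclic closure of $k$ over $P$: after tensoring with $M$, the ``cross differential'' attached to an element $\tilde z$ with $\dd(\tilde z)=f$ can be arranged to vanish when $f\in\fn I$ and to be unchanged when $f$ is replaced by $g$ with $f-g\in\fn I$. Part~(1) is what makes the theorem apply to every $f\in I$: it lets one compute with an element congruent to $f$ modulo $\fn I$ that is moreover regular.

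For (1): since $P$ is noetherian, $\operatorname{Ass}(P)=\{\fp_1,\dots,\fp_r\}$ is finite, and $f+h$ is $P$-regular exactly when $f+h\notin\bigcup_j\fp_j$. A regular element of $I$ lies in no $\fp_j$, so $I\not\subseteq\fp_j$; also $\fn\notin\operatorname{Ass}(P)$, since otherwise every element of $\fn\supseteq I$ would be a zerodivisor, so $\fn\not\subseteq\fp_j$, whence $\fn I\not\subseteq\fp_j$ for all $j$. Discarding any $\fp_j$ contained in another is harmless, because $f+h\notin\fp_j$ forces $f+h\notin\fp_i$ whenever $\fp_i\subseteq\fp_j$; so I may take $\fp_1,\dots,\fp_s$ pairwise incomparable, and then $\fn I\cdot\prod_{i\ne j}\fp_i\not\subseteq\fp_j$ provides $a_j\in\fn I\cap\bigcap_{i\ne j}\fp_i$ with $a_j\notin\fp_j$. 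Now $h:=\sum_{j\,:\,f\in\fp_j}a_j\in\fn I$ works: if $f\notin\fp_j$ every summand lies in $\fp_j$, so $f+h\equiv f\not\equiv 0\bmod\fp_j$; if $f\in\fp_j$ then $f+h\equiv a_j\not\equiv 0\bmod\fp_j$.

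For (2) and (3): fix a finite generating set $\bst=t_1,\dots,t_n$ of $\fn$ and let $A$ be the acyclic closure of $k$ over $P$ from~\ref{ssec:closures}, so $A$ is a DG $P$-algebra, free over $P$, with $A\xrightarrow{\simeq}k$ and $A_1=\bigoplus_i Px_i$, $\dd(x_i)=t_i$. For a regular $f\in I$ put $B:=A\otimes_P P/(f)=A/fA$; since $f$ is a nonzerodivisor lying in $\fn$, one has $\HH{}{B}=\Tor{}{P}{k}{P/(f)}$ equal to $k$ in degrees $0$ and $1$ and zero in degrees $\ge 2$. As $f\in\fn=(\bst)$ I choose $\tilde z\in A_1$ with $\dd(\tilde z)=f$ and let $z\in B_1$ be its image, which is a cycle; the long exact sequence of $0\to A\xrightarrow{f}A\to B\to 0$ identifies $\HH{1}{B}$ with $\ker(f\colon k\to k)=k$ and sends $[z]$ to a generator, so the class of $z$ is a $k$-basis of $\HH{1}{B}$. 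Lemma~\ref{lem:acyclic}(2) then shows $C:=B\langle y\mid\dd(y)=z\rangle$ is a resolution of $k$ by free $P/(f)$-modules, whence $\Tor{}{P/(f)}{k}{M}=\HH{}{M\otimes_{P/(f)}C}$. Because $f\in I$ annihilates $M$, base change gives $M\otimes_{P/(f)}B\cong M\otimes_P A=:N$, a complex with $\HH{}{N}=\Tor{}{P}{k}{M}$ on which $B$ acts by right multiplication in $A$ on the $A$-factor (the element $f$ of $B$ already acting as zero); unwinding the Tate differential identifies $M\otimes_{P/(f)}C$, as a graded module, with $N\langle y\rangle$ carrying $\dd(\xi y^{(i)})=\dd_N(\xi)y^{(i)}+(-1)^{|\xi|}(\xi\tilde z)y^{(i-1)}$, where $\xi\mapsto\xi\tilde z$ is right multiplication by $\tilde z$.

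The one point that does the work is that right multiplication by $ux_j$ on $M\otimes_P A$ is zero for $u\in I$, since $(m\otimes a)(ux_j)=um\otimes ax_j=0$. For (3), where $f\in\fn I$, write $f=\sum_i t_iw_i$ with $w_i\in I$ and take $\tilde z=\sum_i w_ix_i$; then $\xi\tilde z=0$, the cross term vanishes, $M\otimes_{P/(f)}C\cong N\otimes_k k\langle y\rangle$, and taking homology gives $\Tor{}{P/(f)}{k}{M}\cong\Tor{}{P}{k}{M}\otimes_k k\langle y\rangle$. For (2), with $f,g$ regular in $I$ and $f-g\in\fn I$, pick $\tilde z_f$ with $\dd(\tilde z_f)=f$, write $f-g=\sum_i t_iw_i$ with $w_i\in I$, and set $\tilde z_g:=\tilde z_f-\sum_i w_ix_i$, so $\dd(\tilde z_g)=g$; right multiplication by $\tilde z_f$ and by $\tilde z_g$ then agree on $N$, the complexes $M\otimes_{P/(f)}C_f$ and $M\otimes_{P/(g)}C_g$ literally coincide, and the isomorphism follows. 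I do not expect a conceptual obstacle; the effort is entirely in the bookkeeping of the previous paragraph — the base-change isomorphism $M\otimes_{P/(f)}(A/fA)\cong M\otimes_P A$, the fact that $f\cdot N=0$ lets the $B$-action factor through $A$, the precise signs in the transported Tate differential, and the identification of $[z]$ as a generator of $\HH{1}{B}$ via the long exact sequence — each routine but needing care.
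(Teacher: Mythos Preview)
Your proof is correct and follows the same architecture as the paper's: build the Tate extension of the acyclic closure $A$ over $P/(f)$, identify $M\otimes_{P/(f)}B_f\langle y\rangle$ with $(M\otimes_P A)\langle y\rangle$ carrying the cross-differential given by right multiplication by the chosen lift $\tilde z_f$, and track how this depends on the class of $f$ in $I/\fn I$. The one notable difference is in part~(2): the paper passes to $C=A\otimes_P R$, observes via Lemma~\ref{lem:Z} that \emph{any} lifts satisfy $z_f\otimes 1 - z_g\otimes 1=\dd(w)$ for some $w\in C_2$, and then invokes the divided-power isomorphism of Lemma~\ref{lem:acyclic}(1) to get $C\langle y\mid\dd(y)=z_f\otimes 1\rangle\cong C\langle y\mid\dd(y)=z_g\otimes 1\rangle$ as DG $R$-algebras; you instead choose $\tilde z_g=\tilde z_f-\sum w_ix_i$ with $w_i\in I$ from the start, so the two cross-differentials on $M\otimes_P A$ are literally equal and divided powers never enter. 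This is a pleasant shortcut and shows the divided-power machinery is not essential here; what it gives up is the DG $R$-algebra isomorphism prior to tensoring with $M$, which the paper's formulation records.
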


The proof is given at the end of this section.  Part (1) is well-known and part (3) is essentially contained in the work of Shamash~\cite[Theorem~1]{Sh}.

Throughout the rest of the section $\bst:=t_{1},\dots,t_{n}$ denotes a finite generating set for the ideal $\fn$ and $A$ denotes an acyclic closure of $k$ over $P$; see \ref{ssec:closures}.

\subsection{Degree one cycles} 
  \label{ssec:Z}
Set $C:= A\otimes_{P}R$; this is a DG $R$-algebra. As $A$ is a free resolution of $k$, there is an isomorphism of graded $k$-spaces
  \[
\hh{C} \cong \Tor{}PkR\,.
  \]
Expressing elements of $I$ as linear combinations of  $t_1,\dots,t_n$ gives rise to degree one cycles of $C$. 
We give a compact description of that process by using the $P$-module
 \[
Z:= \{(f,x)\in I \oplus A_{1} \mid f = \dd(x)\}\,;
  \]
this is the fiber product of the maps $I\hookrightarrow P \gets A_{1}:\partial$.
 
\begin{lemma}
\label{lem:Z}
The assignments $f\comesfrom\hskip-1pt(f,x)\mapsto x\otimes 1$ define surjective $P$-linear maps
   \begin{gather*}
I \xla{\ \eps\ } Z\xra{\ \zeta\ }Z_{1}(C) \,.
   \end{gather*}

An element $z\in Z$ satisfies  $\eps(z) \in \fn I$ if and only if $\zeta(z)$ is in $\dd_{2}(C)$.
\end{lemma}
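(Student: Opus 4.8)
The plan is to argue in coordinates, using that $A$ is an acyclic closure. First I would fix the $P$-basis $x_{1},\dots,x_{n}$ of $A_{1}=K_{1}$, for which $\dd(x_{a})=t_{a}$, and recall from \ref{ssec:closures} that $A_{2}$ contains the free $P$-module $K_{2}=\bigwedge^{2}K_{1}$ together with divided-power variables $y_{1},\dots,y_{c}$ satisfying $\dd(y_{i})=z_{i}$, where the cycles $z_{1},\dots,z_{c}\in K_{1}$ generate the $\HH 0K$-module $\HH 1K$. Since $\bst$ generates $\fn$ one has $\HH 0K=k$, so $\HH 1K$ is a $k$-vector space and hence $Z_{1}(K)=\sum_{i}Pz_{i}+\dd(K_{2})$. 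After applying $-\otimes_{P}R$, the module $C_{2}$ is free over $R$ on the images of the products $x_{a}x_{b}$ ($a<b$) and of the $y_{i}$, and the differential of the DG $R$-algebra $C$ is $R$-linear; for $z=(f,x)\in Z$, writing $x=\sum_{a}q_{a}x_{a}$, one has $f=\dd(x)=\sum_{a}q_{a}t_{a}$ and $\zeta(z)=\bar x=\sum_{a}\bar q_{a}\bar x_{a}$ in $C_{1}$, where a bar denotes reduction modulo $I$ throughout.

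For the implication $\eps(z)\in\fn I\Rightarrow\zeta(z)\in\dd_{2}(C)$, I would use $\fn=(\bst)$ to write $f=\sum_{b}t_{b}g_{b}$ with $g_{b}\in I$, and set $u:=\sum_{b}g_{b}x_{b}\in K_{1}$. Then $\dd(u)=f=\dd(x)$, so $x-u$ is a cycle in $K_{1}$, and the description of $Z_{1}(K)$ above supplies $\lambda_{i}\in P$ and $w\in K_{2}\subseteq A_{2}$ with $x-u=\sum_{i}\lambda_{i}z_{i}+\dd(w)$. Now reduce modulo $I$: the image of $u$ in $C_{1}$ is $0$ because the $g_{b}$ lie in $I$, so $\zeta(z)=\bar x=\overline{x-u}$; since $\dd(\bar y_{i})=\bar z_{i}$ and $\dd$ is $R$-linear, this equals $\dd\bigl(\sum_{i}\bar\lambda_{i}\bar y_{i}+\bar w\bigr)\in\dd_{2}(C)$.

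For the converse, suppose $\zeta(z)=\dd(\eta)$ with $\eta\in C_{2}$, and lift $\eta$ to some $\wt\eta\in A_{2}$ along the surjection $A_{2}\thra C_{2}$. Then $\dd(\wt\eta)\otimes 1=\zeta(z)=x\otimes 1$ in $C_{1}=A_{1}/IA_{1}$, so $x-\dd(\wt\eta)=\sum_{a}h_{a}x_{a}$ for suitable $h_{a}\in I$. Applying $\dd$ and using $\dd^{2}=0$ gives $\eps(z)=\dd(x)=\sum_{a}h_{a}t_{a}\in I\fn=\fn I$.

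I do not anticipate a genuine obstacle: the argument is bookkeeping with the construction recalled in \ref{ssec:closures}. The only step requiring a little care is the direction $\Rightarrow$, where one replaces $x$ by the honest Koszul cycle $x-u$, observes that $u$ dies after $-\otimes_{P}R$, and notes that $x-u$ becomes a boundary in $C$ precisely because the variables $\bar y_{i}$ supply the classes $\bar z_{i}$ (with $\dd(\bar w)$ supplying the remaining Koszul boundaries). The direction $\Leftarrow$ is formal, using nothing about $\dd_{2}$ beyond surjectivity of $A_{2}\thra C_{2}$ and $\dd^{2}=0$. One should also record at the outset that $A_{1}=K_{1}$, i.e.\ that forming an acyclic closure adds no new variable in degree~$1$, which is immediate from \ref{ssec:closures}.
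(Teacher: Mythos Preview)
Your argument is correct, and for the direction $\zeta(z)\in\dd_{2}(C)\Rightarrow\eps(z)\in\fn I$ it coincides with the paper's: both lift $\eta$ to $A_{2}$, read off $x-\dd(\wt\eta)\in IA_{1}$, and apply $\dd$. For the other direction the paper is shorter: it simply invokes $\HH 1A=0$ to conclude that the cycle $x-\sum a_{i}x_{i}$ (your $x-u$) is a boundary $\dd(w)$ in $A$, whence $x\otimes1=\dd(w\otimes1)$. Your decomposition $Z_{1}(K)=\sum_{i}Pz_{i}+\dd(K_{2})$ together with $\dd(\bar y_{i})=\bar z_{i}$ is in effect a re-proof of $\HH 1T=0$ in the degree needed, so you are unpacking what the paper uses as a black box. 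Both routes work; the paper's is cleaner, while yours makes explicit exactly which part of the acyclic closure is doing the work.

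One omission: the lemma also asserts that $\eps$ and $\zeta$ are surjective, and you do not address this. It is immediate---for $\eps$, any $f\in I\subseteq\fn=\dd(A_{1})$ lifts to some $(f,x)\in Z$; for $\zeta$, any cycle in $C_{1}$ lifts to an $x\in A_{1}$ with $\dd(x)\in I$---but it should be recorded.
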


\begin{proof}
Let $x$ be an arbitrary element of $A_{1}$.

The element $x\otimes 1$ is a cycle in $C_1$ if and only if $\dd(x)$ lies in~$I$.  It follows that $\zeta$ is well-defined and that both $\zeta$ and $\eps$ are surjective.  

Fix $z:=(f,x)$ in $Z$, so $\zeta(z)=x\otimes 1$. Then $\zeta$ is in $\dd(C)$ if and only if $x$ lies in $\dd(A)+IA_1$.  Thus $\zeta(z)$ in $\dd(C)$ yields $f = \dd(x) \in \dd(IA_{1}) = \fn I$.  

Conversely, $f$ in $\fn I$ gives $f = \dd(\sum a_{i}x_{i})$ with $\{x_{i}\}_{i=1}^{n}$ 
in $A_{1}$ and $\{a_{i}\}_{i=1}^{n}$ in $I$, so that $x-\sum a_{i}x_{i}$ is a cycle in~$A_1$.  As $\HH 1A$ is zero, we get
$x-\sum a_{i}x_{i}=\dd(w)$ for some $w\in A$, whence $x\otimes1=\dd(w\otimes1)$.
\end{proof}

\subsection{Hypersurfaces}
\label{ssec:hyper}
Fix an element $f$ in $I$, set $\ov P:=P/(f)P$ and consider the DG $\ov P$-algebra $\ov A:=A\otimes_{P}\ov P= A/fA$. Choose an element $z_{f} \in A_{1}$ such that $\dd(z_{f})=f$.  The residue class of $z_{f}$ in $\ov A$ is a cycle, denoted $\ov {z}_{f}$. Set
\[
B_{f}:= \ov A\langle y\mid \dd(y) = \ov {z}_{f}\rangle\,.
\]
This has a canonical augmentation $B_{f}\to k$, which is a morphism of DG $\ov A$-algebras.

\begin{lemma}
If $f$ is a regular element, then $B_{f}\to k$ is a quasi-isomorphism.
\end{lemma}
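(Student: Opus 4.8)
The plan is to deduce the statement from Lemma~\ref{lem:acyclic}(2), applied with $\ov A$ in the role of $A$ and $\ov{z}_{f}$ in the role of $z$. Concretely, I must check two things: that $\HH i{\ov A}=0$ for $i\ge2$, and that the homology class of $\ov{z}_{f}$ is a basis of the $\HH 0{\ov A}$-module $\HH 1{\ov A}$. Granting these, Lemma~\ref{lem:acyclic}(2) yields that the canonical map $B_{f}=\ov A\langle y\mid\dd(y)=\ov{z}_{f}\rangle\to\HH 0{\ov A}$ is a quasi-isomorphism, and it remains only to note that $\HH 0{\ov A}=k$ and that this canonical map is the augmentation $B_{f}\to k$ of the statement; both are immediate, the latter because the augmentation kills every variable adjoined to $\ov P$ and induces the identity map on $\HH 0$.

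To compute the homology of $\ov A$, I would use that each $A_{n}$ is a free $P$-module and $f$ is $P$-regular, so multiplication by $f$ is injective on $A$ and there is an exact sequence of DG $P$-modules
\[
0\lra A\xra{\ f\ }A\lra \ov A\lra 0\,.
\]
In the associated long exact homology sequence the equalities $\HH iA=0$ for $i\ge1$ at once force $\HH i{\ov A}=0$ for $i\ge2$, while in low degrees one is left with
\[
0\lra \HH 1{\ov A}\xra{\ \eth\ }\HH 0A\xra{\ f\ }\HH 0A\lra \HH 0{\ov A}\lra 0\,.
\]
Since $f\in\fn$, multiplication by $f$ on $\HH 0A=k$ is zero; hence $\HH 0{\ov A}\cong k$, and the connecting map $\eth$ is an isomorphism onto $k$. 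In particular $\HH 1{\ov A}$ is a one-dimensional $k$-vector space, so a class in it is a basis exactly when it is nonzero. Now $\ov{z}_{f}$ lifts to $z_{f}\in A_{1}$ with $\dd(z_{f})=f=f\cdot 1$, so from the construction of the connecting homomorphism, $\eth$ sends the class of $\ov{z}_{f}$ to the class of $1$ in $\HH 0A=k$, which is a unit; as $\eth$ is bijective, the class of $\ov{z}_{f}$ is a $k$-basis of $\HH 1{\ov A}$. Both hypotheses of Lemma~\ref{lem:acyclic}(2) are thus met, and the proof is complete.

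The argument is short and essentially formal once the homology of $\ov A$ is in hand, so I do not anticipate a serious obstacle. The one point requiring genuine care is the identification of $\HH{*}{\ov A}$ together with the computation $\eth[\ov{z}_{f}]=[1]$, which is precisely what guarantees that the class of $\ov{z}_{f}$ spans the one-dimensional space $\HH 1{\ov A}$ as Lemma~\ref{lem:acyclic}(2) demands; this in turn rests on nothing more than $f$ being a non-zerodivisor (for the short exact sequence above) and $f$ lying in $\fn$ (so that multiplication by $f$ is zero on $k$), both available by hypothesis.
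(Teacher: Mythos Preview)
Your proof is correct and follows essentially the same strategy as the paper: compute $\hh{\ov A}$ and verify the hypotheses of Lemma~\ref{lem:acyclic}(2). The only cosmetic differences are that the paper packages the homology computation as $\hh{\ov A}\cong\Tor{}Pk{\ov P}$ (computed from the two-term resolution $0\to P\xra{f}P\to 0$), and invokes Lemma~\ref{lem:Z} to identify the generator of $\HH 1{\ov A}$, whereas you work directly with the long exact sequence of $0\to A\xra{f}A\to\ov A\to 0$ and chase the connecting homomorphism; these are the same computation in different clothing.
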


\begin{proof}
Applying Lemma~\ref{lem:Z} with $(f)$ in place of $I$, we see that $\HH1{\ov A}$ is generated by the class of $z_{f}$ and is  zero if and only if $f$ lies in $\fn f$; the latter possibility is ruled out by the hypothesis that $f$ is regular. By using the resolution 
\[
0\to P\xra{f} P\to 0\,,
\]
of $\ov P$ over $P$ we get 
\[
\Tor{i}Pk{\ov P}\cong 
\begin{cases}
k & \text{for $i=0,1$} \\
0 &\text{otherwise}
\end{cases}
\]
Since $\hh{\ov A} \cong \Tor{}Pk{\ov P}$, it remains to apply Lemma~\ref{lem:acyclic}
\end{proof}

\begin{proof}[Proof of Theorem~\ref{thm:hypersurfaces}]
The zero-divisors in $P$ are the elements of its associated primes; call them $\fp_{1},\dots,\fp_{n}$. Since $I$ has a regular element so does $\fn I$, and hence the latter is not contained in $\cup_{i=1}^{n}\fp_{i}$. Thus, (1) follows from \cite[Theorem~124]{Kap}.

In the next steps the notation and constructions from \ref{ssec:Z} and \ref{ssec:hyper} will be used. In particular, $A$ is a DG $P$-algebra resolution of $k$, and $C:=A\otimes_{P}R$. Moreover $B_{f}$ is a DG $\ov P$-algebra resolution of $k$.

\begin{claim}
\label{cl:homology}
$\Tor{}{P/(f)}k{M}$ is the homology of the DG $R$-module
\[
C\langle y\mid \dd(y)=z_{f}\otimes 1\rangle \otimes_{R}M
\]

Indeed $\Tor{}{P/(f)}k{M}$ is the homology of the DG $R$-module $B_{f}\otimes_{P/(f)} M$. Associativity of tensor products yields isomorphisms of $R$-complexes
\begin{align*}
B_{f}\otimes_{P/(f)} M
	&\cong ((A/fA)\langle y\mid \dd(y) = \ov{z}_{f}\rangle \otimes_{P/(f)} R) \otimes_{R}M \\
	&\cong C\langle y\mid \dd(y) =  z_{f}\otimes 1 \rangle \otimes_{R}M
\end{align*}
This justifies the claim.
\end{claim}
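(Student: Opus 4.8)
The plan is to realize the DG $P/(f)$-algebra $B_{f}$ as a free resolution of $k$ and then transport the computation of $\Tor{}{P/(f)}k{M}$ over to $R$ by a chain of base-change isomorphisms for tensor products and for the Tate construction.

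The first step is to observe that $B_{f}\to k$ is a quasi-isomorphism: this is precisely the lemma proved just above, whose hypothesis is that $f$ is regular. Since $A$ is an acyclic closure of $k$ over $P$ its underlying graded module is free, hence so is that of $\ov A=A/fA$ over $P/(f)$, and adjoining the divided-power variable $y$ (see \ref{ssec:tate}) keeps it free; thus $B_{f}$ is a bounded-below complex of free $P/(f)$-modules, so $B_{f}\otimes_{P/(f)}M$ computes $\Tor{}{P/(f)}k{M}$, with $M$ regarded as a $P/(f)$-complex via $\pi_{f}$. This gives the first sentence of the claim.

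Next I would factor the tensor product through $R$. Since $M$ is an $R$-complex, associativity of tensor products gives $B_{f}\otimes_{P/(f)}M\cong(B_{f}\otimes_{P/(f)}R)\otimes_{R}M$, so it suffices to identify $B_{f}\otimes_{P/(f)}R$ with $C\langle y\mid\dd(y)=z_{f}\otimes1\rangle$. By the description of the Tate construction in \ref{ssec:tate}, the underlying graded algebra of $B_{f}$ is $\ov A\otimes_{P/(f)}(P/(f))\langle y\rangle$; base changing along $P/(f)\to R$ and using $\ov A\otimes_{P/(f)}R=(A/fA)\otimes_{P/(f)}R\cong A\otimes_{P}R=C$ (legitimate because $f\in I$) yields an isomorphism of graded $R$-algebras $B_{f}\otimes_{P/(f)}R\cong C\otimes_{R}R\langle y\rangle=C\langle y\rangle$. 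It then remains to check that this isomorphism carries the differential $\dd(y)=\ov z_{f}$ of $B_{f}$ to $\dd(y)=z_{f}\otimes1$ on $C\langle y\rangle$, where $z_{f}\otimes1$ denotes the image of $z_{f}\in A_{1}$ in $C_{1}$; this image is a cycle since $\dd(z_{f})=f\in I$, which is exactly the mechanism of \ref{ssec:Z}. Composing the two isomorphisms proves the claim.

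Everything here is standard bookkeeping; the only point that needs genuine attention is verifying that each identification is compatible with the DG structures — in particular that the Tate adjunction $(-)\langle y\mid\dd(y)=z\rangle$ commutes with the base change $-\otimes_{P/(f)}R$ and sends the cycle $\ov z_{f}$ to $z_{f}\otimes1$ — and that $B_{f}$ is indeed a resolution, which is where regularity of $f$ enters via the preceding lemma. There is no substantive obstacle beyond this careful tracking of structure.
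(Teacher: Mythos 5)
Your argument is correct and is essentially the paper's own proof: identify $B_{f}$ as a free resolution of $k$ over $P/(f)$ via the preceding lemma, so that $B_{f}\otimes_{P/(f)}M$ computes $\Tor{}{P/(f)}k{M}$, and then use associativity of tensor products together with $\ov A\otimes_{P/(f)}R\cong A\otimes_{P}R=C$ to rewrite this complex as $C\langle y\mid\dd(y)=z_{f}\otimes1\rangle\otimes_{R}M$. The extra checks you flag (freeness of $B_{f}$, compatibility of the Tate adjunction with base change, and that $z_{f}\otimes1$ is a cycle because $\dd(z_{f})=f\in I$) are exactly the routine points the paper leaves implicit, so there is nothing further to add.
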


(3) When $f\in \fn I$ holds, the element $z_{f}$ is in $I A$ and hence its residue class in $C$ is zero. The preceding claim then yields
\begin{align*}
\Tor{}{P/(f)}k{M} 
	&\cong \hh{C\langle y\mid \dd(y)=0\rangle \otimes_{R}M} \\
	&= \hh{C\otimes_{R}M}\otimes_{R}R\langle y\rangle \\ 
	&=\hh{A\otimes_{P}M}\otimes_{R}R\langle y\rangle \\ 
	&\cong \Tor{}PkM\otimes_{R}R\langle y\rangle \\
	&\cong \Tor{}PkM\otimes_{k}k\langle y\rangle 
\end{align*}

(2) Recall that the element $g$ is also a regular element in $I$, with $f-g$ in $\fn I$.

\begin{claim}
\label{cl:iso}
There is an isomorphism of DG $R$-algebras
\[
C\langle y\mid \dd(y)=z_{f}\otimes 1\rangle \cong C\langle y\mid \dd(y)=z_{g}\otimes 1\rangle\,.
\]
\end{claim}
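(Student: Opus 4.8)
The plan is to obtain the claimed isomorphism directly from Lemma~\ref{lem:acyclic}(1), applied to the DG $R$-algebra $C$ (with $R$ playing the role of $P$). By that lemma it is enough to produce an element $w\in C_{2}$ that carries a sequence of divided powers and for which
\[
z_{g}\otimes 1 \;=\; (z_{f}\otimes 1) + \dd(w)
\]
holds in $C_{1}$: the associated $C$-linear map sending $y^{(i)}$ to $(y+w)^{(i)}$ is then the desired isomorphism of DG $R$-algebras. Note that $z_{f}\otimes 1$ and $z_{g}\otimes 1$ are cycles in $C_{1}$, as $f$ and $g$ belong to $I$.

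To build such a $w$ I would invoke Lemma~\ref{lem:Z}. Since $\dd(z_{g}-z_{f})=g-f$ and $g-f\in\fn I$ by hypothesis, the pair $(g-f,\,z_{g}-z_{f})$ lies in $Z$ and is sent by $\eps$ into $\fn I$. Lemma~\ref{lem:Z} then yields that its image under $\zeta$, namely $(z_{g}-z_{f})\otimes 1=(z_{g}\otimes 1)-(z_{f}\otimes 1)$, lies in $\dd_{2}(C)$; choosing $w\in C_{2}$ with $\dd(w)$ equal to this element gives exactly the identity displayed above. This is the one place where the assumption $f-g\in\fn I$ enters.

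It then remains to check that $w$ admits a sequence of divided powers; in fact every element of $C_{2}$ does. Every element of $A_{2}$ carries a sequence of divided powers, as recorded in \ref{ssec:closures}: the degree-two part of the acyclic closure is spanned over $P$ by the second Koszul component, on which divided powers were written out in \ref{ssec:koszul}, together with the divided-power variables $y_{1},\dots,y_{c}$, and such elements are closed under sums and under $P$-multiples by \ref{ssec:divided}. Since $C_{2}=A_{2}\otimes_{P}R$ and a sequence of divided powers on $a\in A_{2}$ base-changes to one on $a\otimes 1\in C_{2}$, while divided powers persist under $R$-linear combinations, every element of $C_{2}$ has divided powers. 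Feeding $w$ and the cycle $z_{f}\otimes 1$ into Lemma~\ref{lem:acyclic}(1) now produces
\[
C\langle y\mid\dd(y)=z_{f}\otimes 1\rangle \;\cong\; C\langle y\mid\dd(y)=(z_{f}\otimes 1)+\dd(w)\rangle \;=\; C\langle y\mid\dd(y)=z_{g}\otimes 1\rangle
\]
as DG $R$-algebras, which is the claim.

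I expect the only genuinely delicate point to be the divided-power bookkeeping of the last paragraph: the argument really does need $A$ to be an acyclic closure, so that all of $A_{2}$---hence all of $C_{2}$---carries divided powers and Lemma~\ref{lem:acyclic}(1) is available; a general DG algebra resolution of $k$ would not suffice. The rest is formal manipulation of the Tate construction. Once Claim~\ref{cl:iso} is in hand, part~(2) of Theorem~\ref{thm:hypersurfaces} follows by tensoring the isomorphism with $M$ over $R$, passing to homology, and applying Claim~\ref{cl:homology} at $f$ and at $g$.
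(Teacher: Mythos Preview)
Your proof is correct and follows essentially the same approach as the paper: both invoke Lemma~\ref{lem:Z} to produce $w\in C_{2}$ with $\dd(w)=(z_{g}-z_{f})\otimes 1$, observe that $w$ carries divided powers because every element of $A_{2}$ (hence of $C_{2}$) does, and then apply Lemma~\ref{lem:acyclic}(1). Your write-up is more explicit about the divided-power bookkeeping and about how $f-g\in\fn I$ enters via Lemma~\ref{lem:Z}, but the argument is the same.
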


Indeed, Lemma \ref{lem:Z} provides elements $(f,x)$ and $(g,y)$ in the module $Z$ defined in \S\ref{ssec:Z}, and $w$ in $C_2$ 
satisfying $x\otimes 1 - y\otimes 1=\dd(w)$.  Since $w$ has a sequence of divided powers (see \S\ref{ssec:closures}),  
Lemma~\ref{ssec:tate} yields the desired isomorphism. 

The isomorphism in the Claim \ref{cl:iso} induces an isomorphism of $R$-complexes
\[
C\langle y\mid \dd(y)=z_{f} \otimes 1 \rangle\otimes_{R}M \cong C\langle y\mid \dd(y)=z_{g} \otimes 1 \rangle\otimes_{R}M\,.
\]
Taking homology, and recalling Claim \ref{cl:homology}, yields the desired isomorphism.
\end{proof}

\section{Support sets}
\label{sec:supportsets}
 
Let $(P,\fn,k)$ be a (commutative noetherian) local ring, with maximal ideal $\fn$  and residue field $k$. Let $\pi\colon P\to R$ be a surjective homomorphism of rings such that the ideal $I:=\Ker(\pi)$ contains a regular element.  Throughout, $M$ will be an $R$-module; the results presented below all carry over to complexes; see Remark~\ref{rem:complexes}. 

\subsection{Support sets}
\label{ss:support}
We view the $k$-vector space $I/\fn I$ as endowed with the Zariski topology. Given $f\in I$ we write $[f]$ for its residue class in $I/\fn I$.  The \emph{homological support set} of $M$ with respect to $\pi$ is the subset of $I/\fn I$ described by
\[
\mcV_{\pi}(M):= \left\{ [f]\in I/\fn I \left| 
\begin{gathered}
\text{there exists a regular element $g\in I$ with } \\ 
\text{$f-g\in \fn I$ such that $\Tor i{P/(g)}kM\ne 0$}\\
\text{ for infinitely many integers $i$}
\end{gathered}
\right.\right\}\cup \{0\}\,.
\]
Thanks to Theorem~\ref{thm:hypersurfaces}, one can test whether $[f]\in \mcV_{\pi}(M)$ holds by considering $\Tor{}{P/(g)}kM$ for  \emph{any} regular element $g$ in $I$ with $[f]=[g]$. Evidently, if $[f]$ is in $\mcV_{\pi}(M)$, then so is $[\lambda f]$ for any non-zero element $\lambda$ in $k$. Hence $\mcV_{\pi}(M)$ is an affine cone with vertex at $\{0\}$. Therefore one could also consider the homological support set as a subset of the projective space associated to $I/\fn I$. See Remark~\ref{rem:history} for antecedents of this construction.

The condition on the vanishing of Tor appearing in the definition of the homological support set of $M$ has a more familiar interpretation, at least under additional conditions on the module $M$. This is explained in the next paragraph.

\subsection{Projective dimension}
\label{ss:pdim}
We write $\pdim_{R}M$ for the projective dimension of the $R$-module $M$. If $M$ is finitely generated, then
\begin{equation}
\label{eq:perfect}
\pdim_{R}M <\infty \iff \Tor iRkM=0 \quad \text{for $i\gg 0$}\,;
\end{equation}
see, for example, \cite[Proposition 5.5(P)]{AF}.

\subsection{Alternative description of support}
\label{ssec:alternative}
Fix a minimal generating set $f_{1},\dots,f_{c}$ for the ideal $I$. The residue classes $[f_{1}], \dots, [f_{c}]$ form a basis for $I/\fn I$ as a $k$-vector space. We will use this to identify $I/\fn I$ with $k^{c}$ whenever necessary, or convenient.

Given a point $\bsa:=(a_{1},\dots,a_{c})$ in $k^{c}$ it follows from Theorem~\ref{thm:hypersurfaces}(1) that there is an element $\bsa':=(a'_{1},\dots, a'_{c})$ in $P^{c}$ such that 
\begin{enumerate}
\item
$a'_{i} = a_{i} \mod \fn$ for $i=1,\dots,c$;
\item
$\sum a'_{i}f_{i}$ is regular in $P$.
\end{enumerate}
We will call such an element $\bsa'$ a \emph{lifting} of $\bsa$ to $P$. When $M$ is finitely generated, one can also describe the subset $\mcV_{\pi}(M)$ from \ref{ss:support} as
\[
\mcV_{\pi}(M)=\left\{\bsa\in {k}^{c} 
\left| 
\begin{gathered}
\text{there exists a lifting ${\bsa}'$ of $\bsa$ with} \\
\pdim_{P/(f)}(M)=\infty \text{ for $f=\sum a'_{i}f_{i}$}
\end{gathered}
\right.\right\}
\]
In particular, the right hand side does not depend on the choice of a minimal generating set for $I$. The subset $\mcV_{\pi}(M)\subseteq k^{c}$ is closed when $M$ is finitely generated; this is contained in Theorem~\ref{thm:closed}. For the proof of this result, we require a statement about complexes over regular rings, presented below.

\subsection{Complexes over regular rings} By a \emph{regular ring} we mean a commutative noetherian ring $S$  such that the local rings $S_{\fp}$ are regular for all $\fp$ in $\Spec S$.  For any $\fp\in \Spec S$, We write $k(\fp)$ for the residue field, $(S/\fp)_{\fp}$, at $\fp$.

 \begin{lemma}
 \label{lem:closed}
Let $S$ be a regular ring and let $X$ be a complex of  $S$-modules such that $\HH iX$ is finitely generated for each $i$ and equals $0$ for $i\ll 0$. Fix a prime ideal $\fp$ in $S$. The following conditions are equivalent.
 \begin{enumerate}[\quad\rm(1)]
 \item
 $\Tor i S{k(\fp)}X=0$ for $i\gg 0$;
 \item
 $k(\fp) \otimes_{S} {\HH iX}=0$ for $i\gg 0$;
 \item
${\HH iX}_{\fp}=0$ for $i\gg 0$.
 \end{enumerate}
 \end{lemma}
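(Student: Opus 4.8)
The plan is to localize at $\fp$ to reduce to a regular local ring, and then read off the three conditions from a minimal free resolution.

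First I would use flatness of $S\to S_{\fp}$, which gives natural isomorphisms $\Tor iS{k(\fp)}X\cong\Tor i{S_{\fp}}{k(\fp)}{X_{\fp}}$ and $(\HH iX)_{\fp}\cong\HH i{X_{\fp}}$ for all $i$, the latter still finitely generated over $S_{\fp}$ and zero for $i\ll0$. Replacing $S$ by $S_{\fp}$ and $X$ by $X_{\fp}$, I may assume $(S,\fp,k)$ is regular local; I set $d:=\dim S$, which is finite and equals the global dimension of $S$. In this setting the three conditions become: $\Tor iSkX=0$ for $i\gg0$; $k\otimes_S\HH iX=0$ for $i\gg0$; and $\HH iX=0$ for $i\gg0$. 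The equivalence of the last two is immediate: each $\HH iX$ is finitely generated over the local ring $S$, so Nakayama shows $k\otimes_S\HH iX=0$ if and only if $\HH iX=0$, degree by degree. So the real task is to prove $(1)\Leftrightarrow(3)$.

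For that I would take a minimal free resolution $\eps\colon F\xra{\ \simeq\ }X$, which exists because $\HH X$ is degreewise finitely generated and bounded below: the $F_i$ are finitely generated free, $F_i=0$ for $i\ll0$, and $\dd(F)\subseteq\fp F$. Minimality makes the differential of $k\otimes_SF$ zero, so $\Tor iSkX\cong k\otimes_SF_i$, and hence (1) is equivalent to $F$ being bounded above. If $F$ is bounded above then $\HH iX=\HH iF=0$ for $i\gg0$, giving (3). For the converse I would argue: assuming $\HH iX=0$ for $i>b$, the complex $\cdots\to F_{b+2}\xra{\dd_{b+2}}F_{b+1}\to W\to0$ with $W:=\Image(\dd_{b+1}\colon F_{b+1}\to F_b)$ is acyclic---its homology in degrees $>b$ is that of $F$---and minimal, so it is a minimal free resolution of the finitely generated module $W$. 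Since $S$ is regular, $\pdim_S W\le d$, so this resolution has length at most $d$; therefore $F_i=0$ for $i>b+1+d$, i.e. $F$ is bounded above, and (1) follows.

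The one implication with real content is $(1)\Rightarrow(3)$: converting the vanishing of $\Tor$ against $k(\fp)$ into a bound on the homology of $X$. The two ingredients that make it work are the existence of a minimal free resolution and the regularity of $S$, which forces $W$ to have finite projective dimension. The technical point I would be most careful about is checking that the truncation of $F$ displayed above is genuinely minimal and genuinely resolves $W$; everything else is bookkeeping.
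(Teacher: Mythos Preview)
Your proof is correct. The localization step and the appeal to Nakayama for $(2)\Leftrightarrow(3)$ match the paper exactly; the difference lies in $(1)\Leftrightarrow(3)$. You build a minimal semifree resolution $F\xra{\simeq}X$ and read both conditions off of it: minimality gives $\Tor iSkX\cong k\otimes_SF_i$, so $(1)$ becomes ``$F$ bounded above,'' which immediately yields $(3)$; and for $(3)\Rightarrow(1)$ you truncate $F$ to obtain a minimal free resolution of the syzygy $W$, then invoke regularity to bound its length. The paper instead proves $(1)\Rightarrow(3)$ by tensoring $X$ with the Koszul complex $E$ on generators of $\fp$---since $E$ lies in the thick subcategory generated by $k$, condition $(1)$ forces $\HH i{E\otimes_SX}=0$ for $i\gg0$---and then cites a standard amplitude argument (Foxby--Iyengar) to conclude $\HH iX=0$ for $i\gg0$. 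Your route is more elementary and entirely self-contained; the paper's Koszul argument has the virtue that its $(1)\Rightarrow(3)$ step does not require the existence of a minimal resolution and in fact works over any noetherian local ring, regular or not. One small slip in your closing commentary: you label $(1)\Rightarrow(3)$ as the direction with real content and say regularity (via $\pdim_SW<\infty$) is what drives it, but in your own argument that implication is the trivial one; it is $(3)\Rightarrow(1)$ where $W$ and regularity enter. This does not affect the proof, only the narration.
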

 
 \begin{proof}
Since the action $S$ on $k(\fp)$ factors through the localization homomorphism $S\to S_{\fp}$ and $S_{\fp}$ is flat as an $S$-module, one has isomorphisms
\begin{align*}
\Tor {}S{k(\fp)}X  &\cong \Tor {}{S_{\fp}}{k(\fp)}{X_{\fp}} \\
k(\fp)\otimes_{S} {\HH iX} &\cong k(\fp) \otimes_{S_{\fp}} {\HH iX}_{\fp}
\end{align*}
Moreover, $X_{\fp}$ is a complex of $S_{\fp}$-modules with $(X_{\fp})_{i}=0$ for $i\ll 0$. Thus, replacing $S$ and $X$ by their localizations at $\fp$, we assume  $S$ is local and  $\fp$ is its maximal ideal.
 
 (1)$\iff$(3) Let $E$ be the Koszul complex on a finite set of generators of the ideal $\fm$. Since $E$ is in the thick subcategory of the derived category of $S$ generated by $k$, the hypotheses in (1) implies $\HH i{E\otimes_{S}X}=0$ for $i\gg 0$.  Using the fact that the $S$-modules $\HH iX$ are finitely generated, a standard argument now implies $\HH iX=0$ for $i\gg 0$; see, for example, \cite[1.3]{FI}.
 
 (2)$\iff$(3) This is by Nakayama's Lemma.
\end{proof}

\subsection{On being closed}
\label{ssec:closed}
Let $P[\bss]$ be the polynomial ring over $P$ in indeterminates $\bss:=s_{1},\dots,s_{c}$ and 
\[
\wt Q:= P[\bss]/(\wt f) \quad\text{where $\wt f:=\sum_{i=1}^{c} s_{i}f_{i}$.}
\]
The ideal $I$ contains a $P$-regular element so there is no non-zero element in $P$ that annihilates each of $f_{1},\dots,f_{c}$. Therefore $\wt f$ is a regular element in $P[\bss]$; see, for example, \cite[Chapter 1, Exercise 2]{AM}. Thus the complex of $P[\bss]$-modules
\begin{equation}
\label{eq:resQdot}
0\lra P[\bss] \xra{\ \wt f\ } P[\bss]\lra 0
\end{equation}
is a free resolution of $\wt Q$.

For each ${\bsa}':=(a'_{1},\dots,a'_{c})$ in $P^{c}$ form the homomorphism of $P$-algebras
\[
\eps_{{\bsa}'}\colon P[\bss]\lra P \quad\text{given by $s_{i}\mapsto a'_{i}$.}
\]
In what follows, we write $P_{\bsa'}$ for $P$ viewed as a $P[\bss]$-module via $\eps_{{\bsa}'}$. Base change along the canonical surjection $P[\bss]\to \wt Q$ gives a commutative square 
\[
\begin{gathered}
\xymatrixcolsep{1pc}
\xymatrix{
P[\bss] \ar@{->}[d] \ar@{->}[rr]^{\eps_{{\bsa}'}} && P \ar@{->}[d]& \\
\wt Q \ar@{->}[rr] && Q \ar@{=}[r] & P/(f)
}
\end{gathered}
\]
where $f=\sum a'_{i}f_{i}$.

\begin{lemma}
\label{lem:pushout}
When the element $f:=\sum a'_{i}f_{i}$ is $P$-regular, one has
\[
\Tor i{P[\bss]}{P_{\bsa'}}{\wt Q} =
\begin{cases}
Q  & \text{for $i=0$} \\
0   & \text{otherwise}
\end{cases}
\]
\end{lemma}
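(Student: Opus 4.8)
The plan is to compute $\Tor{}{P[\bss]}{P_{\bsa'}}{\wt Q}$ directly from the short free resolution \eqref{eq:resQdot} of $\wt Q$ over $P[\bss]$, by applying the functor $P_{\bsa'}\otimes_{P[\bss]}-$ to it. The homology of the resulting complex of $P$-modules is by definition the Tor modules in question, so it suffices to identify that complex and read off its homology.

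First I would make the identification. Tensoring $P[\bss]$ with $P_{\bsa'}$ over $P[\bss]$ yields $P$ canonically, since $P_{\bsa'}$ is $P$ with $P[\bss]$ acting through $\eps_{\bsa'}$. Under this identification the differential $\wt f\colon P[\bss]\to P[\bss]$ becomes multiplication by $\eps_{\bsa'}(\wt f)=\sum a'_{i}f_{i}=f$; this is precisely the commutativity of the square displayed in \ref{ssec:closed}. Hence $P_{\bsa'}\otimes_{P[\bss]}\eqref{eq:resQdot}$ is the complex
\[
0\lra P\xra{\ f\ }P\lra 0
\]
concentrated in homological degrees $1$ and $0$.

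Finally I would read off the homology. The hypothesis that $f$ is a $P$-regular element says exactly that multiplication by $f$ on $P$ is injective, so this complex has no homology in degree $1$, while in degree $0$ its homology is $P/(f)=Q$. This gives the claimed values of $\Tor i{P[\bss]}{P_{\bsa'}}{\wt Q}$.

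I do not expect any genuine obstacle: this is a one-step computation with a length-one resolution. The only point deserving a sentence of justification is that the induced differential is multiplication by $f$, which is immediate from the definition of $\eps_{\bsa'}$ (equivalently, from the commutative square). One may also phrase the conclusion as the assertion that $Q=\wt Q\otimes_{P[\bss]}P_{\bsa'}$ agrees, under the regularity hypothesis, with the derived tensor product of $\wt Q$ and $P_{\bsa'}$ over $P[\bss]$---whence the name of the lemma.
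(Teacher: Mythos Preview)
Your argument is correct and is precisely the paper's proof: tensor the two-term resolution \eqref{eq:resQdot} with $P_{\bsa'}$ to obtain $0\to P\xra{f}P\to 0$ and read off the homology. The only difference is that you spell out why the induced differential is multiplication by $f$, which the paper leaves implicit.
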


\begin{proof}
Using the free resolution~\eqref{eq:resQdot} of $\wt Q$ over $P[\bss]$, one gets that $\Tor{}{P[\bss]}{P_{\bsa'}}{\wt Q}$ is the homology of the complex
\[
0\lra P\xra{\  f\ } P \lra 0\qedhere
\]
\end{proof}

Set $M[\bss]:= P[\bss]\otimes_{P}M$. Evidently $f\cdot M[\bss]=0$ so $M[\bss]$ is  an $\wt Q$-module.

\begin{proposition}
\label{prp:mx}
With notation as above, the following statements hold.
\begin{enumerate}[\quad\rm(1)]
\item
When $p:=\pdim_{P}M$ is finite, for any $\wt Q$-module $N$ one has
\[
\Tor {i+2}{\wt Q}N{M[\bss]} \cong \Tor {i}{\wt Q}N{M[\bss]} \quad\text{for $i\ge p$.}
\]
\item
When $\sum_{i} a'_{i}f_{i}$ is regular in $P$, one has
\[
\Tor i{\wt Q}Q{M[\bss]} =
\begin{cases}
M & \text{for $i=0$}\\
0 & \text{otherwise}
\end{cases}
\]
\end{enumerate}
\end{proposition}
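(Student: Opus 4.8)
The plan is to prove the two parts separately, treating part (2) first since it is purely formal and part (1) using a DG-algebra resolution.

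For part (2), the idea is to compute $\Tor{}{\wt Q}Q{M[\bss]}$ by resolving $M[\bss]$ over $\wt Q$ — or, more efficiently, by using a resolution of $Q$. Since $Q=P/(f)$ and $\wt Q=P[\bss]/(\wt f)$, Lemma~\ref{lem:pushout} tells us $Q\cong \Tor 0{P[\bss]}{P_{\bsa'}}{\wt Q}$ with higher Tor vanishing, which says exactly that $P_{\bsa'}\otimes^{\mathbf L}_{P[\bss]}\wt Q\simeq Q$ in the derived category. Now $M[\bss]=P[\bss]\otimes_P M$, so over $\wt Q$ one has $Q\otimes^{\mathbf L}_{\wt Q}M[\bss]\simeq (P_{\bsa'}\otimes^{\mathbf L}_{P[\bss]}\wt Q)\otimes^{\mathbf L}_{\wt Q}M[\bss]\simeq P_{\bsa'}\otimes^{\mathbf L}_{P[\bss]}M[\bss]$. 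But $M[\bss]=P[\bss]\otimes_P M$ is the base change of $M$ along $P\to P[\bss]$, which is flat, so $P_{\bsa'}\otimes^{\mathbf L}_{P[\bss]}(P[\bss]\otimes_P M)\simeq P_{\bsa'}\otimes^{\mathbf L}_{P[\bss]}P[\bss]\otimes_P M\simeq M$, concentrated in degree zero. Concretely, I would phrase this by tensoring the resolution~\eqref{eq:resQdot} of $\wt Q$ over $P[\bss]$ with $M[\bss]$ over $\wt Q$ — associativity of tensor products gives the two-term complex $0\to M[\bss]\xra{\wt f}M[\bss]\to 0$ with $M[\bss]$ regarded over $P[\bss]$; but $M[\bss]=P[\bss]\otimes_P M$, so this is $P[\bss]\otimes_P M$ with multiplication by $\wt f$ acting through $\eps_{\bsa'}$, i.e. $(0\to P\xra{f}P\to 0)\otimes_P M$, and since $f$ is $P$-regular and $M$ is a $P/(f)$-module this has homology $M$ in degree $0$ and $0$ elsewhere. (A small point: one needs $f\cdot M=0$, which holds because $M$ is an $R=P/I$-module and $f\in I$.)

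For part (1), the plan is to build an explicit free resolution of $M[\bss]$ over $\wt Q$ that is eventually $2$-periodic, mirroring the Tate/Shamash construction already used in Section~\ref{sec:Hypersurfaces}. Starting from a finite free $P$-resolution $F$ of $M$ of length $p=\pdim_P M$, one forms $F[\bss]:=P[\bss]\otimes_P F$, a finite free $P[\bss]$-resolution of $M[\bss]$. Since $\wt f$ kills $M[\bss]$, multiplication by $\wt f$ on $F[\bss]$ is null-homotopic; pick a homotopy $\sigma$ with $\dd\sigma+\sigma\dd=\wt f$. Then the standard construction produces a $\wt Q$-free resolution of $M[\bss]$ on the graded module $\wt Q\otimes_{P[\bss]}F[\bss]\otimes_{P[\bss]}P[\bss]\langle y\rangle$ with $|y|=2$, whose differential in high degrees (degree $\ge p$, once the finite part $F$ is exhausted) is exactly the "shift in $y$" map induced by $\sigma$, hence $2$-periodic. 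Tensoring this resolution with any $\wt Q$-module $N$ and taking homology yields the claimed isomorphism $\Tor{i+2}{\wt Q}N{M[\bss]}\cong \Tor i{\wt Q}N{M[\bss]}$ for $i\ge p$. Alternatively — and this is probably cleaner given the machinery set up — one observes that $M[\bss]$ over $\wt Q$ is precisely the kind of module whose homology is governed by Theorem~\ref{thm:hypersurfaces}(3) applied with $P[\bss]$ in place of $P$, the ideal $(\wt f)$ being principal and generated by a regular element: part (3) of that theorem, suitably relativized to an arbitrary coefficient module $N$ rather than $k$, gives $\Tor{}{\wt Q}N{M[\bss]}\cong \Tor{}{P[\bss]}N{M[\bss]}\otimes\text{(divided power algebra on }y)$, and the right side is $2$-periodic above degree $p$ because $\Tor i{P[\bss]}N{M[\bss]}$ need not vanish but the $y$-polynomial structure forces the periodicity once $i\ge p$. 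I would need to check that the relative version of the argument in Theorem~\ref{thm:hypersurfaces}(3) goes through with $N$ in place of $k$; inspecting that proof, the only place $k$ is used is in identifying $\hh{C\otimes_R M}$ — replacing it by $\hh{C\otimes_R M\otimes_{?}N}$ is harmless, so this is routine.

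The main obstacle is bookkeeping in part (1): making precise that the resolution of $M[\bss]$ over $\wt Q$ becomes genuinely $2$-periodic — not merely eventually periodic up to a quasi-isomorphism — starting exactly at homological degree $p$, so that the periodicity isomorphism of Tor holds for all $i\ge p$ and not just $i\gg 0$. This requires being careful that the homotopy $\sigma$ for multiplication by $\wt f$ on the finite complex $F[\bss]$ can be chosen so that the resulting differential on $\wt Q\otimes F[\bss]\langle y\rangle$ has no "tail" contributions beyond degree $p$, which is automatic because $F_i=0$ for $i>p$. Everything else is an application of associativity of tensor products and the already-established lemmas.
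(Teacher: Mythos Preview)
Your derived-category argument for (2) is correct and is, in substance, the paper's proof: the paper uses the change-of-rings spectral sequence
\[
E^2_{i,j}=\Tor i{\wt Q}{\Tor j{P[\bss]}{L}{\wt Q}}{M[\bss]} \Longrightarrow \Tor{i+j}{P[\bss]}{L}{M[\bss]}
\]
with $L=P_{\bsa'}$, which collapses by Lemma~\ref{lem:pushout} to give $\Tor i{\wt Q}{Q}{M[\bss]}\cong\Tor i{P[\bss]}{P_{\bsa'}}{M[\bss]}$; that is exactly your associativity $Q\otimes^{\mathbf L}_{\wt Q}M[\bss]\simeq P_{\bsa'}\otimes^{\mathbf L}_{P[\bss]}M[\bss]$. (Your ``concrete'' rephrasing, however, is garbled: tensoring \eqref{eq:resQdot} with $M[\bss]$ over $P[\bss]$ computes $\Tor{}{P[\bss]}{\wt Q}{M[\bss]}$, not $\Tor{}{\wt Q}{Q}{M[\bss]}$, and no step introduces $\eps_{\bsa'}$. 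Stick with the derived-tensor formulation.)

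For (1) the paper again runs the \emph{same} spectral sequence, now with $L=N$. Since $N$ is a $\wt Q$-module, $\wt f\cdot N=0$, so \eqref{eq:resQdot} gives $\Tor j{P[\bss]}{N}{\wt Q}\cong N$ for $j=0,1$ and $0$ otherwise; the two-row spectral sequence unwinds to a long exact sequence
\[
\cdots\to\Tor{i+2}{P[\bss]}{N}{M[\bss]}\to\Tor{i+2}{\wt Q}{N}{M[\bss]}\to\Tor{i}{\wt Q}{N}{M[\bss]}\to\Tor{i+1}{P[\bss]}{N}{M[\bss]}\to\cdots
\]
whose outer terms vanish for $i\ge p$ because $\pdim_{P[\bss]}M[\bss]=p$. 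This is quicker than your approach (a) via the Eisenbud--Shamash resolution, though that approach is also valid (modulo the system of higher homotopies you omit, which are needed for the differential on $F[\bss]\langle y\rangle$ to square to zero over $\wt Q$; they exist automatically since $F$ is bounded and free).

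Your approach (b), however, does not work. Theorem~\ref{thm:hypersurfaces}(3) requires the hypersurface element to lie in $\fn I$; taking $I=(\wt f)$ one would need $\wt f\in\fn\cdot(\wt f)$, which fails since $\wt f$ is regular. The tensor decomposition you assert, $\Tor{}{\wt Q}{N}{M[\bss]}\cong\Tor{}{P[\bss]}{N}{M[\bss]}\otimes k\langle y\rangle$, is false in general---already for $N=M[\bss]=\wt Q$ the left side is $\wt Q$ concentrated in degree $0$ while the right side is nonzero in every degree. Discard this alternative.
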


\begin{proof}
The argument uses the standard first quadrant change of rings spectral sequence associated to the homomorphism $P[\bss]\to\wt Q$, and a  $P[\bss]$-module $L$.
\[
E^{2}_{i,j}:= \Tor i{\wt Q}{\Tor j{P[\bss]}L{\wt Q}}{M[\bss]} \Longrightarrow \Tor {i+j}{P[\bss]}L{M[\bss]} 
\]

(1) Set $L:=N$ in the spectral sequence. From \eqref{eq:resQdot} and $\wt f N=0$ one gets that
\[
\Tor i{P[\bss]}N{\wt Q} \cong 
\begin{cases}
N & \text{for $i=0,1$} \\
0 & \text{otherwise}
\end{cases}
\]
Thus the $E^{2}_{i,j}=0$ for $q\ne 0,1$ so the spectral sequence unwinds to an exact sequence
\begin{align*}
 \cdots \to & \Tor {i+2}{P[\bss]}N{M[\bss]} \to \Tor {i+2}{\wt Q}N{M[\bss]} \to \Tor i{\wt Q}N{M[\bss]}  \\
    \to   & \Tor {i+1}{P[\bss]}N{M[\bss]} \to\cdots
\end{align*}
Since $\pdim_{P}M=p$, one has  $\pdim_{P[\bss]}{M[\bss]}=p$, so the first and the last modules in the sequence above are zero
for $i\ge p$. This yields (1).

(2)  Consider the spectral sequence above with $L:=P_{\bsa'}$. In view of Lemma~\ref{lem:pushout}, this spectral sequence collapses on the second page and yields isomorphisms
\[
\Tor i{\wt Q}Q{M[\bss]} \cong \Tor {i}{P[\bss]}{P_{\bsa'}} {M[\bss]} \quad\text{for each $i$.}
\]
Let $F$ be a free resolution of $M$ as a $P$-module; then $F[\bss]:=P[\bss]\otimes_{P}F$ is a free resolution of $M[\bss]$ over $P[\bss]$. Associativity of tensor products yields 
\[
P_{\bsa'} \otimes_{P[\bss]} F[\bss] \xra{ \simeq} P \otimes_{P} M \cong M
\]
 of $P$-complexes. This yields 
\[
\Tor i{P[\bss]}{P_{\bsa'}} {M[\bss]} =
\begin{cases}
M  & \text{for $i=0$} \\
0   & \text{otherwise}
\end{cases}
\]
The preceding computations justifies the assertion in (2).
\end{proof}

The statement and proof of the next result involve  constructions introduced in \ref{ssec:closed}. We identify $k[\bss]$ and the ring of $k$-valued algebraic functions on $I/\fn I$ by mapping $s_{i}$ to the $i$th coordinate function of the $k$-basis $\{[f_{1}],\cdots, [f_{c}]\}$ of of $I/\fn I$.

\begin{theorem}
\label{thm:closed}
Assume that $k$ is algebraically closed and that $I$ contains a regular element. Fix an integer $d\ge \depth P$.  Let $M$ be a finitely generated $R$-module and $J$ the annihilator of the $k[\bss]$-module 
\[
\Tor {d}{\wt Q}{k[\bss]}{M[\bss]}\oplus \Tor {d+1}{\wt Q}{k[\bss]}{M[\bss]}\,.
\]

If $\pdim_{P}M=\infty$, then $\mcV_{\pi}(M)=k^{c}$. Otherwise it is given by
\[
\mcV_{\pi}(M) = \{(a_{1},\dots,a_{c})\in k^{c}\mid h(\bsa)=0 \text{ for all polynomials } h(\bss)\in  J\}.
\]
In particular,  $\mcV_{\pi}(M)$ is a Zariski-closed subset of $k^{c}$.
\end{theorem}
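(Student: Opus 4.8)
The plan is to reformulate, for each point $\bsa\in k^{c}$, the finiteness condition defining $\mcV_{\pi}(M)$ as a support condition for a single finitely generated $k[\bss]$-module, using the eventual two-periodicity supplied by Proposition~\ref{prp:mx}(1) and the transfer between $\Tor$ and homology supplied by Lemma~\ref{lem:closed}. We may assume $M\neq 0$. For $\bsa=(a_{1},\dots,a_{c})\in k^{c}$ write $k_{\bsa}$ for $k$ viewed as a $\wt Q$-module via the $P$-algebra map $\wt Q\to k$ with $s_{i}\mapsto a_{i}$ (well defined because $f_{i}\in\fn$ forces $\wt f\mapsto 0$), and $\fp_{\bsa}:=(s_{1}-a_{1},\dots,s_{c}-a_{c})$ for the corresponding maximal ideal of $k[\bss]$, so that $k(\fp_{\bsa})=k_{\bsa}$. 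Fix a lifting $\bsa'$ of $\bsa$ as in \ref{ssec:alternative}, and set $f:=\sum a'_{i}f_{i}$ and $Q:=P/(f)$. The first step is the isomorphism
\[
\Tor{i}{P/(f)}{k}{M}\cong\Tor{i}{\wt Q}{k_{\bsa}}{M[\bss]}\qquad\text{for all }i\ge 0\,.
\]
To obtain it, choose a free resolution $F\to M[\bss]$ over $\wt Q$; by Proposition~\ref{prp:mx}(2) the complex $Q\otimes_{\wt Q}F$ is a free resolution of $M$ over $Q$, and tensoring it with $k$ over $Q$ gives $k\otimes_{Q}(Q\otimes_{\wt Q}F)=k\otimes_{\wt Q}F=k_{\bsa}\otimes_{\wt Q}F$, because the $\wt Q$-structure on $k$ obtained through $\wt Q\to Q\to k$ sends $s_{i}$ to $a'_{i}\bmod\fn=a_{i}$; the homology of this last complex computes both $\Tor{}{P/(f)}{k}{M}$ and $\Tor{}{\wt Q}{k_{\bsa}}{M[\bss]}$. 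Now if $\bsa\neq 0$ the description in \ref{ssec:alternative} gives $\bsa\in\mcV_{\pi}(M)\iff\pdim_{P/(f)}M=\infty$, while if $\bsa=0$ then $f\in\fn I$, so Theorem~\ref{thm:hypersurfaces}(3) together with $M\neq 0$ gives $\Tor{i}{P/(f)}{k}{M}\neq 0$ for every even $i$, whence $\pdim_{P/(f)}M=\infty$, in accordance with $0\in\mcV_{\pi}(M)$. Using \eqref{eq:perfect}, the upshot is: for every $\bsa\in k^{c}$,
\[
\bsa\in\mcV_{\pi}(M)\quad\Longleftrightarrow\quad\Tor{i}{\wt Q}{k_{\bsa}}{M[\bss]}\neq 0\ \text{ for infinitely many }i\,.
\]

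Suppose first $\pdim_{P}M=\infty$. Then for any regular element $f\in I$ the change-of-rings inequality $\pdim_{P}M\le\pdim_{P/(f)}M+1$ (which holds because $\pdim_{P}P/(f)=1$) forces $\pdim_{P/(f)}M=\infty$; combined with the equivalence just established, $\mcV_{\pi}(M)=k^{c}$, as asserted.

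Now suppose $p:=\pdim_{P}M<\infty$. The Auslander--Buchsbaum formula gives $p\le\depth P\le d$, so Proposition~\ref{prp:mx}(1) yields, for every $\wt Q$-module $N$, the periodicity $\Tor{i+2}{\wt Q}{N}{M[\bss]}\cong\Tor{i}{\wt Q}{N}{M[\bss]}$ for $i\ge d$. Taking $N=k_{\bsa}$, this shows $\Tor{i}{\wt Q}{k_{\bsa}}{M[\bss]}\neq 0$ for infinitely many $i$ exactly when $\Tor{d}{\wt Q}{k_{\bsa}}{M[\bss]}\oplus\Tor{d+1}{\wt Q}{k_{\bsa}}{M[\bss]}\neq 0$. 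Next, set $X:=k[\bss]\otimes_{\wt Q}F$, where $F$ is a free resolution of $M[\bss]$ over $\wt Q$ with finitely generated terms (available since $\wt Q$ is noetherian and $M[\bss]$ is finitely generated over it); then $X$ is a bounded-below complex of free $k[\bss]$-modules whose homology $T_{i}:=\HH iX=\Tor{i}{\wt Q}{k[\bss]}{M[\bss]}$ is finitely generated over the regular ring $k[\bss]$, and $k_{\bsa}\otimes_{k[\bss]}X=k_{\bsa}\otimes_{\wt Q}F$ computes $\Tor{}{\wt Q}{k_{\bsa}}{M[\bss]}$. Applying Lemma~\ref{lem:closed} to $X$ at $\fp_{\bsa}$ identifies vanishing of $\Tor{i}{\wt Q}{k_{\bsa}}{M[\bss]}$ for $i\gg 0$ with vanishing of $(T_{i})_{\fp_{\bsa}}$ for $i\gg 0$, and the periodicity of $T_{i}$ for $i\ge d$ (Proposition~\ref{prp:mx}(1) with $N=k[\bss]$) turns the latter into vanishing of $(T_{d}\oplus T_{d+1})_{\fp_{\bsa}}$. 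Combining with the first step, for every $\bsa\in k^{c}$:
\[
\bsa\in\mcV_{\pi}(M)\iff(T_{d}\oplus T_{d+1})_{\fp_{\bsa}}\neq 0\iff\fp_{\bsa}\in\operatorname{Supp}_{k[\bss]}(T_{d}\oplus T_{d+1})=V(J)\,,
\]
the last equality holding because $T_{d}\oplus T_{d+1}$ is finitely generated. Hence $\mcV_{\pi}(M)=V(J)$, a Zariski-closed subset of $k^{c}$.

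The technical heart is the isomorphism $\Tor{i}{P/(f)}{k}{M}\cong\Tor{i}{\wt Q}{k_{\bsa}}{M[\bss]}$ of the first step: it is what lets the single finitely generated $k[\bss]$-module $T_{d}\oplus T_{d+1}$ record, simultaneously at all the hypersurfaces $P/(f)$, whether $\pdim_{P/(f)}M$ is infinite, and establishing it cleanly requires tracking the $\wt Q$-module structures along $\wt Q\to Q\to k$ and recognizing which complexes compute which $\Tor$. A secondary point needing care is the verification that $M[\bss]$ is finitely generated over $\wt Q$, so that each $T_{i}$---hence $T_{d}\oplus T_{d+1}$---is finitely generated over $k[\bss]$ and its support is the zero set of its annihilator.
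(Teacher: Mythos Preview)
Your proof is correct and follows essentially the same approach as the paper's: both reduce to the free resolution $G$ of $M[\bss]$ over $\wt Q$, use Proposition~\ref{prp:mx}(2) to identify $\Tor{}{Q}{k}{M}$ with the homology of $k_{\bsa}\otimes_{k[\bss]}X$ where $X=k[\bss]\otimes_{\wt Q}G$, then invoke Lemma~\ref{lem:closed} and the periodicity of Proposition~\ref{prp:mx}(1) to pin everything down to the support of $T_d\oplus T_{d+1}$. Your explicit treatment of $\bsa=0$ via Theorem~\ref{thm:hypersurfaces}(3) and your phrasing of the intermediate $\Tor$ over $\wt Q$ rather than over $k[\bss]$ are cosmetic variations, not a different route.
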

 
 \begin{proof}
Set $p:=\pdim_{P}M$. If $\pdim_{P/(f)}M$ is finite for some regular element $f\in I$, then $p$ is finite as well. Therefore $\mcV_{\pi}(M)=k^{c}$ when $p=\infty$; see \ref{ssec:alternative}. For the rest of the proof we assume that $p$ is finite. 
 
Fix a point $\bsa\in\BA_{k}^{c}$. Let ${\bsa}'\in P^{c}$  be a lifting of $\bsa$, as in \ref{ssec:alternative}, and set $Q:=P/(f)$, where $f=\sum_{i} a'_{i}f_{i}$.  Let $G$ be a free resolution of $M[\bss]$ over $\wt Q$ and set 
\[
X:=k[\bss]\otimes_{\wt Q}G\,.
\]
By construction, this is a complex of free $k[\bss]$-modules and satisfies 
\[
\hh X\cong \Tor {}{\wt Q}{k[\bss]}{M[\bss]}\,.
\]
In particular, the $k[\bss]$-module $\HH iX$ is finitely generated for each $i$, and $0$ for $i<0$.

\setcounter{claim}{0}

\begin{claim}
\label{cl:closed-one}
Let $\eps_{\bsa}\colon k[\bss]\to k$ be the homomorphism of $k$-algebras  with $\eps_{\bsa}(x_{i})=a_{i}$ for each $i$. There is an isomorphism of graded $k$-vector spaces
\[
\Tor{}QkM \cong \Tor{}{k[\bss]}{k_{\bsa}}X \,.
\]
Indeed, by Proposition~\ref{prp:mx}(2) the complex $Q\otimes_{\wt Q} G$ of free $Q$-modules is a resolution of $M$, so $\Tor {}QkM$ is the homology of the complex 
 \[
 k\otimes_{Q}(Q\otimes_{\wt Q}G) \cong k\otimes_{\wt Q}G \cong k_{\bsa} \otimes_{k[\bss]}(k[\bss]\otimes_{\wt Q}G)=k_{\bsa}\otimes_{k[\bss]}X\,.
 \]
The second isomorphism holds because the composed map $\wt Q\to Q\to k$ factors as $\wt Q\to k[\bss]\xra{\eps_{\bsa}}k$.
Taking homology and keeping in mind that the $k[\bss]$-complex $X$ consists of free modules and is concentrated in non-negative degrees, one gets the stated isomorphism.
\end{claim}

\begin{claim}
\label{cl:two-closed}
Set $\fm_{\bsa}:=\Ker(\eps_{\bsa})$. One has
\[
\bsa \in \mcV_{\pi}(M) \iff \fm_{\bsa}\supseteq J\,.
\]
Indeed, consider the following chain of equivalences, where the first one holds by definition, and the second is by Claim~\ref{cl:closed-one}.
\begin{align*}
\bsa\not\in \mcV_{\pi}(M)
	& \iff \Tor iQkM=0 \text{ for $i\gg 0$} \\
	& \iff \Tor i{k[\bss]}{k_{\bsa}}X =0 \text{ for $i\gg 0$} \\
	& \iff {\HH iX}_{\fm_{\bsa}}=0 \text{ for $i\gg 0$.}
\end{align*}
The last one is given by  Lemma~\ref{lem:closed}, applied with $S:=k[\bss]$ and $\fp:=\fm_{\bsa}$. Since $\pdim_{R}M=p$ is finite, the Auslander-Buchsbaum Equality implies $d\ge p$. Proposition~\ref{prp:mx}(1) then gives the first equivalence below.
\begin{align*}
{\HH iX}_{\fm_{\bsa}}=0 \text{ for $i\gg 0$} 
	& \iff {\HH dX}_{\fm_{\bsa}} \oplus {\HH{d+1}X}_{\fm_{\bsa}}=0\\
	&\iff \fm_{\bsa}\not\supseteq J\,. 
\end{align*}
The second one is standard. Putting together the string of equivalences above yields the contrapositive of the desired claim. 
\end{claim}

Since $k$ is algebraically closed, Hilbert's Nullstellensatz implies that every maximal ideal of $k[\bss]$ is of the form $\fm_{\bsa}$, for some $\bsa\in k^{c}$, and that
\[
\fm_{\bsa}\supseteq J \iff h(\bsa)=0 \text{ for all polynomials } h(\bss)\in  J\,.
\]
Combining this with the conclusion in Claim~\ref{cl:two-closed} yields the desired result.
\end{proof}

\begin{remark}
\label{rem:complexes}
Theorem~\ref{thm:closed} carries over, with essentially the same proof, to the case when $M$ is an $R$-complex with $\hh M$ finitely generated. The crucial point is that, in the notation above, the natural morphism of $Q$-complexes
\[
Q\otimes_{\wt Q}^{\mathbf L} M[\bss] \lra M
\]
is a quasi-isomorphism. This can be verified using a spectral sequence as in the proof of Proposition~\ref{prp:mx}(2).
\end{remark}

\begin{remark}
\label{rem:history}
When $P$ is a domain the homological support set of $M$, as in \ref{ss:support}, is the one introduced by Jorgensen~\cite{Jo}, which extends the notion of varieties for modules over complete intersections due to Avramov~\cite{Av:vpd}. The last part of Theorem~\ref{thm:closed}--- that $\mcV_{\pi}(M)$ is closed---was proved by Jorgensen~\cite[Theorem~2.2]{Jo} under the additional assumption that $P$ is a domain. The argument in \emph{op.~cit.} is quite different from the one presented above, which builds on a idea in the proof of \cite[Theorem~3.1]{AB}.
\end{remark}

\section{Defining equations}
\label{sec:equations}
The main result of this section, Theorem~\ref{thm:equations}, gives equations that define the homological support set of modules over complete intersections. The statement, and its proof, involves some  linear algebra over commutative rings, recalled below.

\begin{remark}
\label{rem:minors}
Let $S$ be a local ring and $\delta\colon U \to V$ a homomorphism of finite free $S$-modules.  For an integer $r\ge 0$, let $I_{r}(\delta)$ denote the ideal generated by the $r\times r$ minors of a matrix representing $\delta$ in some bases for $U$ and $V$; the ideal $I_{r}(\delta)$ is independent of these choices. The following conditions are equivalent for any integer $r\ge 1$.
\begin{enumerate}[\quad\rm(1)]
\item 
$I_{r}(\delta) = S$;
\item
$\Image(\delta)$ shares with $V$  a free direct summand of rank $r$;
\item
$\Coker(\delta)$ can be generated, as an $S$-module, by $\rank_{S}V-r$ elements.
\end{enumerate}
Assume in addition that $S$ is a domain. With $S_{0}$ the field of fractions of $S$, the \emph{rank} of an $S$-module $M$ is the rank of the $S_{0}$-vector space $S_{0}\otimes_{S}M$. Then, when $\rank\Image(\delta)\le r$ the conditions above are equivalent to 
\begin{enumerate}[\quad\rm(4)]
\item
$\Coker(\delta)$ is free of rank equal to $\rank_{S}V-r$.
\end{enumerate}
These assertions are easy to verify, given that $I_{r}(\delta)=S$ if and only if there exists a choice of bases for $U$ and $V$ such that $\delta$ is represented by a matrix of the form 
\[
\begin{bmatrix} I_{r} & 0 \\ 0 & B \end{bmatrix}
\]
where $I_{r}$ is the identity matrix of size $r$; see also~\cite[Lemmas 1.4.8, 1.4.9]{BH}. 
\end{remark}

The result below, where $\sqrt{J}$ denotes the radical of an ideal $J$, concerns differential modules; cf.~\cite[Remark~1.6]{ABI}, and also \cite[Example 1.7]{ABI} that shows that the hypothesis that $S$ is regular is needed.

\begin{proposition}
\label{pr:dmodule}
Let $S$ be a regular ring and $\delta\colon S^{2r}\to S^{2r}$ an $S$-linear map with $\delta^{2}=0$. The annihilator $J$ of the $S$-module $\Ker(\delta)/\Image(\delta)$ satisfies 
\[
\sqrt{J}=\sqrt{I_{r}(\delta)}\,.
\]
\end{proposition}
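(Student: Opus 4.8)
The plan is to prove the two inclusions $\sqrt{J}\subseteq\sqrt{I_r(\delta)}$ and $\sqrt{I_r(\delta)}\subseteq\sqrt{J}$ separately, and since both $J$ and $I_r(\delta)$ are genuine ideals and radicals are detected prime-by-prime, I will reduce to checking, for each $\fp\in\Spec S$, when $J\subseteq\fp$ and when $I_r(\delta)\subseteq\fp$. Passing to the localization $S_\fp$, which is a regular \emph{local} ring (here the hypothesis that $S$ is regular is used, via Remark~\ref{rem:minors}(4) which needs $S_\fp$ to be a domain, i.e.\ an integral domain—regular local rings are domains), the map $\delta$ becomes a square endomorphism of $S_\fp^{2r}$ with $\delta^2=0$, and $H:=\Ker(\delta)/\Image(\delta)$ localizes compatibly, so $J_\fp$ is the annihilator of $H_\fp$. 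Thus it suffices to show, over a regular local ring $S$ with $\delta\colon S^{2r}\to S^{2r}$, $\delta^2=0$: the homology $H$ vanishes if and only if $I_r(\delta)=S$.

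For the heart of the matter, suppose first $I_r(\delta)=S$. By Remark~\ref{rem:minors}, $\Image(\delta)$ is a free direct summand of $S^{2r}$ of rank $r$, say $S^{2r}=F\oplus F'$ with $F=\Image(\delta)$ free of rank $r$. Since $\delta^2=0$ we have $F=\Image(\delta)\subseteq\Ker(\delta)$, and comparing ranks—using that $S$ is a domain so ranks are additive on the exact sequence $0\to\Ker\delta\to S^{2r}\to\Image\delta\to0$—gives $\rank\Ker(\delta)=r=\rank\Image(\delta)$. Now $\Ker(\delta)/\Image(\delta)$ is a torsion module, but it is also a submodule-quotient sitting inside $S^{2r}/F\cong F'$, which is free, hence torsion-free; more precisely $\Ker(\delta)$ is a direct summand of $S^{2r}$ as well (it is the kernel of the split surjection $S^{2r}\to\Image(\delta)$, which is split since $\Image(\delta)$ is projective), so $\Ker(\delta)$ is free of rank $r$ containing the rank-$r$ free summand $\Image(\delta)$; over a domain a rank-$r$ free module inside a rank-$r$ free module with torsion-free quotient forces equality, so $H=0$. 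Conversely, suppose $H=0$, i.e.\ $\Ker(\delta)=\Image(\delta)$. Then the complex $S^{2r}\xrightarrow{\delta}S^{2r}\xrightarrow{\delta}S^{2r}$ is exact in the middle, and I want to deduce $I_r(\delta)=S$. The cleanest route is the Buchsbaum--Eisenbud acyclicity criterion applied to the two-term complex $0\to\Image(\delta)\hookrightarrow S^{2r}\xrightarrow{\delta}\Image(\delta)\to0$, or more simply: $\Coker(\delta)=S^{2r}/\Image(\delta)=S^{2r}/\Ker(\delta)\cong\Image(\delta)$, which has rank $r$ (additivity of rank over the domain $S$ again, since $\rank\Ker\delta+\rank\Image\delta=2r$ and exactness in the middle forces $\rank\Ker\delta=\rank\Image\delta=r$), so $\Coker(\delta)$ has rank $r$, hence $\rank\Image(\delta)=r$; then I claim $\Coker(\delta)$ is actually free of rank $r$, because it is isomorphic to $\Image(\delta)$, a submodule of the free module $S^{2r}$, and—here is where regularity enters in full force rather than just at the level of being a domain—over a regular local ring one argues by induction on $\operatorname{depth}$ or uses that $\Image(\delta)$ is a \emph{second syzygy} (it is $\Ker(\delta)$, the kernel of $\delta\colon S^{2r}\to S^{2r}$ whose image is in turn a first syzygy), so it is free by the Auslander--Buchsbaum formula once one checks it has finite projective dimension and depth $2r$; invoking Remark~\ref{rem:minors}(4) with $r=\rank\Image(\delta)$ then gives $I_r(\delta)=S$.

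Let me streamline the converse to avoid overkill: the slick statement is that if $\delta^2=0$ and the middle homology vanishes, then over \emph{any} ring $\Coker(\delta)\cong\Image(\delta)=\Ker(\delta)$ is a direct summand of $S^{2r}$ precisely when it is projective, and over a regular local ring every finitely generated module of finite projective dimension that happens to be a syzygy of sufficiently high order is free. Concretely: $\Image(\delta)$ is the image of $\delta$, so there is an exact sequence $\cdots\to S^{2r}\xrightarrow{\delta}S^{2r}\xrightarrow{\delta}S^{2r}\to\Coker(\delta)\to0$ which, being periodic and exact, gives $\Coker(\delta)$ finite projective dimension only if it is already projective (a periodic resolution of finite length forces the module to be projective); but we do not even need finiteness of projective dimension—rather, since $S$ is regular local, $\operatorname{pdim}_S\Coker(\delta)<\infty$ automatically, and a module of finite projective dimension with an eventually periodic minimal resolution must have projective dimension $0$, i.e.\ be free. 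Then $\rank\Coker(\delta)=\rank\Image(\delta)=r$ and Remark~\ref{rem:minors} (equivalence of (1) and (3), or (4)) yields $I_r(\delta)=S$. This periodicity-forces-freeness observation is the one genuinely non-formal input and is where regularity of $S$ is indispensable—exactly as flagged by the reference to \cite[Example~1.7]{ABI}.

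The main obstacle I anticipate is organizing the rank bookkeeping cleanly: one must track $\rank\Ker(\delta)$, $\rank\Image(\delta)$, and $\rank\Coker(\delta)$ and the relation $2r=\rank\Ker(\delta)+\rank\Image(\delta)$, while being careful that $H=\Ker(\delta)/\Image(\delta)$ being torsion is automatic from $\delta^2=0$ only after one knows $\rank\Image(\delta)=\rank\Ker(\delta)$, which itself needs the middle-exactness in the converse direction. A secondary subtlety is making sure the localization argument is legitimate for the annihilator: $\operatorname{Ann}_S(H)$ localizes to $\operatorname{Ann}_{S_\fp}(H_\fp)$ because $H$ is finitely generated (it is a subquotient of $S^{2r}$), so $J_\fp=(\operatorname{Ann}_S H)_\fp=\operatorname{Ann}_{S_\fp}(H_\fp)$, and likewise $I_r(\delta)$ commutes with localization since it is generated by finitely many explicit minors; with these two facts the prime-by-prime reduction is rigorous and the local computation above finishes the proof.
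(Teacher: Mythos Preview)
Your reduction to the regular local case and your treatment of the implication $H=0\Rightarrow I_r(\delta)=S$ coincide with the paper's: both argue that the periodicity of $\cdots\to S^{2r}\xra{\delta}S^{2r}\xra{\delta}S^{2r}\to\cdots$ combined with the finiteness of projective dimension over a regular local ring forces $\Image(\delta)\cong\Coker(\delta)$ to be free of rank $r$, whence $I_r(\delta)=S$ by Remark~\ref{rem:minors}.

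For the implication $I_r(\delta)=S\Rightarrow H=0$ you take a genuinely different route. The paper uses Remark~\ref{rem:minors}(4) to get $\Coker(\delta)$ free of rank $r$, tensors the exact sequence $0\to H\to\Coker(\delta)\to\Image(\delta)\to 0$ with the residue field, and applies Nakayama. You instead argue that $\Image(\delta)$ is a free rank-$r$ summand of $S^{2r}$, whence $H\subseteq\Coker(\delta)$ is simultaneously torsion (rank zero) and torsion-free (submodule of a free module over a domain), hence zero. Both arguments are short; yours avoids passing to the residue field but relies on $S$ being a domain, which is of course available.

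One point of organization needs repair. You open with ``By Remark~\ref{rem:minors}, $\Image(\delta)$ is a free direct summand of $S^{2r}$ of rank $r$,'' but Remark~\ref{rem:minors}(2) only says $\Image(\delta)$ \emph{contains} a rank-$r$ free summand of $S^{2r}$; to conclude equality you need $\rank_S\Image(\delta)\le r$, which you only derive afterwards. The fix is to establish the rank inequality first (from $\Image(\delta)\subseteq\Ker(\delta)$ and $\rank\Ker(\delta)+\rank\Image(\delta)=2r$, exactly as the paper does), and then invoke Remark~\ref{rem:minors}(4) to obtain that $\Coker(\delta)$ is free of rank $r$, hence $\Image(\delta)$ is a summand. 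With that reordering your argument goes through cleanly.
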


\begin{proof}
Set $H:=\Ker(\delta)/\Image(\delta)$. The radical of an ideal is the intersection of the prime ideals containing it, and a prime ideal $\fp$ of  $S$ contains $J$ precisely when $H_{\fp}$ is non-zero. Thus, the desired statement is equivalent to: 
\[
H_{\fp}=0 \iff I_{r}(\delta)\not\subset \fp\,.
\]
Replacing $S$ by $S_{\fp}$ and $\delta$ by $\delta_{\fp}$, it thus suffices to prove that when $S$ is a regular local ring, one has $H=0$ if and only if $I_{r}(\delta)=S$.
 
There are exact sequences of $S$-modules
\begin{align}
\label{eq:KI}
&0\lra \Ker(\delta) \lra S^{2r}\lra \Image(\delta)\lra 0\,, \\ 
\label{eq:H}
&0\lra \Image(\delta) \lra \Ker(\delta) \lra H \lra 0\,, \\ 
\label{eq:HC}
&0\lra H \lra \Coker(\delta) \lra \Image(\delta)\lra 0\,.
\end{align}
In particular there are (in)equalities
\[
\rank_{S}\Ker(\delta) + \rank_{S}\Image(\delta)=2r \quad\text{and}\quad 
\rank_{S}\Ker(\delta) \ge \rank_{S}\Image(\delta) \,.
\]
Therefore one has 
\[
\rank_{S}\Ker(\delta)\ge r \ge  \rank_{S}\Image(\delta)\,,
\]
and both inequalities become equalities when $H=0$.

Assume $H=0$ so that $\rank_{S}\Image(\delta)=r$. Since the $S$-modules $\Coker(\delta)$ and $\Image(\delta)$ are isomorphic, by \eqref{eq:HC}, it remains to prove that $\Image(\delta)$ is free; then Remark~\ref{rem:minors} would yield $I_{r}(\delta)=S$.

For each integer $n\ge 1$ there is an exact sequence of $S$-modules
\[
0\lra \Image(\delta) \lra S^{2r} \lra \cdots \lra S^{2r}\lra \Image(\delta) \lra 0\,,
\]
of length $n$. In particular for $n=\dim S+2$, one sees that $\Image(\delta)$ is a $(\dim S)$-th syzygy module and hence free, as $S$ is a regular local ring. This is as desired.

Assume $I_{r}(\delta)=S$. Since $S$ is a domain and $\rank_{S}\Image(\delta)\le r$, it follows from Remark~\ref{rem:minors} that  $\Coker(\delta)$ is free of rank $r$. This implies that, with $l$ the residue field of $S$, the natural maps
\[
\Coker(\delta)\otimes_{S}l \xra{\ \cong\ } \Coker(\delta\otimes_{S}l) \quad\text{and} \quad
\Image(\delta)\otimes_{S}l \xra{\ \cong\ } \Image(\delta\otimes_{S}l)
\]
are isomorphisms. Thus, applying $(-)\otimes_{S}l$ to \eqref{eq:HC} and keeping in mind that $\Image(\delta)$ is free, one then gets an exact sequence
\[
0\to H\otimes_{S}l \to \Coker(\delta\otimes_{S}l) \to \Image(\delta\otimes_{S}l)\to 0\,,
\]
The isomorphisms above imply that both $l$-vector spaces on the right have rank $r$. It follows that $H\otimes_{S}l=0$; thus $H=0$, by Nakayama's Lemma.
\end{proof}

\subsection{Complete intersections}
\label{ssec:ci}
Let $(P,\fn,k)$ be a regular local ring of Krull dimension $d$ and containing its residue field, $k$, as a subring. Let $I$ be the ideal generated by a $P$-regular sequence $f_{1},\dots,f_{c}$ in $\fn^{2}$ and set $R:=P/I$.

As in \ref{ssec:closed}, let $P[\bss]$ be the polynomial ring over $P$ in indeterminates $\bss:=s_{1},\dots,s_{c}$,  set $\wt f:=\sum_{i} s_{i}f_{i}$ and $\wt Q:=P[\bss]/(\wt f)$.  Since $\wt Q$ is a hypersurface, the $\wt Q$-module $\wt Q/\fn \wt Q \cong k[\bss]$ has a free resolution $G$ with the property that $G_{i}\cong {\wt Q}^{2^{d-1}}$ and $\partial_{i}=\partial_{i+2}$ for all $i \ge c$ so that the complex $G_{\ge c}$ has the form
\[
\cdots \xra{\ B\ } G_{c+3} \xra{\ A\ } G_{c+2}\xra{\ B\ } G_{c+1}\xra{\ A\ } G_{c}
\]
The matrices $A$ and $B$ come from a matrix factorization of $\wt f$; see~\cite{Ei}.

Given an $R$-module $M$, the matrices $A$ and $B$ define $\wt Q$-linear endomorphisms of $M[\bss]^{2^{d-1}}$ and hence an endomorphism
\[
\begin{bmatrix}
 0 & A \\
 B & 0
 \end{bmatrix} \colon M[\bss]^{2^{d}}\lra M[\bss]^{2^{d}}\,.
\]
When $M$ has finite rank as a $k$-vector space, one can view this as an endomorphism of free $k[\bss]$-modules of rank $(\rank_{k}M)2^{d}$; we write $C(M)$ for this map.

\begin{theorem}
\label{thm:equations}
Let $R$ be as in \ref{ssec:ci}. If $M$ is an $R$-module with $\rank_{k}M$ finite, $\mcV_{R}(M)$ is  defined by the vanishing of the ideal $I_{r}(C(M))$, where $r:=(\rank_{k}M)2^{d-1}$.
\end{theorem}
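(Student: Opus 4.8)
The plan is to combine Theorem~\ref{thm:closed} with Proposition~\ref{pr:dmodule}. First I would apply Theorem~\ref{thm:closed} with $d:=\dim P=\depth P$ (which is legitimate since $P$ is regular local, hence Cohen--Macaulay): this identifies $\mcV_{R}(M)$ with the zero set of the annihilator $J$ of the $k[\bss]$-module
\[
H:=\Tor{d}{\wt Q}{k[\bss]}{M[\bss]}\oplus\Tor{d+1}{\wt Q}{k[\bss]}{M[\bss]}\,.
\]
The case $\pdim_{P}M=\infty$ cannot occur here because $P$ is regular, so $M$ always has finite projective dimension over $P$ and we are in the second, explicit case of that theorem.

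The second step is to compute $H$ in terms of the matrix factorization. Using the free resolution $G$ of $k[\bss]=\wt Q/\fn\wt Q$ over $\wt Q$ described in \ref{ssec:ci}, one has $\Tor{}{\wt Q}{k[\bss]}{M[\bss]}=\hh{M[\bss]\otimes_{\wt Q}G}$. For $i\ge c$ the complex $M[\bss]\otimes_{\wt Q}G$ is two-periodic, alternating the maps induced by $A$ and $B$ on $M[\bss]^{2^{d-1}}$. Since $c\le d$ (indeed $c\le d$ because $f_1,\dots,f_c$ is a regular sequence in the $d$-dimensional regular ring $P$), both degrees $d$ and $d+1$ lie in the periodic range, and I claim $H$ is naturally isomorphic to $\Ker(C(M))/\Image(C(M))$, where $C(M)=\left[\begin{smallmatrix}0&A\\B&0\end{smallmatrix}\right]$ acts on $M[\bss]^{2^d}=k[\bss]^{(\rank_k M)2^d}$. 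The point is that the homology of a two-periodic complex $\cdots\xra{A}\xra{B}\xra{A}$ in consecutive degrees is exactly $\Ker(A)/\Image(B)$ and $\Ker(B)/\Image(A)$, and the direct sum of these two is $\Ker(C(M))/\Image(C(M))$; one must also check $AB=0=BA$ on $M[\bss]$, which follows from the matrix factorization identity $AB=BA=\wt f\cdot\idmap$ together with $\wt f\cdot M[\bss]=0$. This also shows $C(M)^2=0$, so Proposition~\ref{pr:dmodule} applies with $S:=k[\bss]$ (a polynomial ring over a field, hence regular) and $\delta:=C(M)$, with $2r=(\rank_k M)2^d$, i.e.\ $r=(\rank_k M)2^{d-1}$.

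The third step is to assemble: Proposition~\ref{pr:dmodule} gives $\sqrt{J}=\sqrt{I_{r}(C(M))}$, where $J=\operatorname{Ann}_{k[\bss]}H$ is precisely the ideal appearing in Theorem~\ref{thm:closed}. Since the zero set of an ideal depends only on its radical, the variety defined by $J$ equals the variety defined by $I_{r}(C(M))$, and the former is $\mcV_{R}(M)$ by Theorem~\ref{thm:closed}. This yields the claim. I expect the main obstacle to be the careful bookkeeping in the second step: verifying that $G_{\ge c}$ really does induce on $M[\bss]$ the alternating pattern of $A$ and $B$ with the correct ranks, that both $d$ and $d+1$ fall in the two-periodic range (which needs $c\le d$, and one should note $c\le d$ since a regular sequence of length $c$ in a $d$-dimensional regular local ring forces $c\le d$), and that the identification of $H$ with $\Ker(C(M))/\Image(C(M))$ is compatible with the $k[\bss]$-module structure so that the annihilators match. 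The rest is a formal combination of results already in hand.
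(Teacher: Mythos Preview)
Your proposal is correct and follows essentially the same route as the paper: apply Theorem~\ref{thm:closed}, identify the relevant $\Tor$ as the homology of the two-periodic complex $M[\bss]\otimes_{\wt Q}G$ in the range $i\ge c$ (using $d\ge c$), recognize the direct sum as $\Ker C(M)/\Image C(M)$, and invoke Proposition~\ref{pr:dmodule} with $S=k[\bss]$ and $\delta=C(M)$. Your additional checks (that $\pdim_PM<\infty$ because $P$ is regular, that $C(M)^2=0$ via $AB=BA=\wt f\cdot\idmap$ and $\wt f\cdot M[\bss]=0$, and that zero sets depend only on radicals) are all appropriate fleshing-out of points the paper leaves implicit.
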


\begin{proof}
Note that $\Tor{}{\wt Q}{k[\bss]}{M[\bss]}$ is the homology of the complex $G\otimes_{\wt Q}M[\bss]$, which for $i\geq c$ reads
\[
\cdots \lra M[\bss]^{2^{d-1}} \xra{\ B\ } M[\bss]^{2^{d-1}} \xra{\ A\ }  M[\bss]^{2^{d-1}}.
\]
Since $\depth P=d \ge c$, it follows that $\Tor{d+1}{\wt Q}{k[\bss]}{M[\bss]}\oplus  \Tor{d}{\wt Q}{k[\bss]}{M[\bss]}$ is the middle homology of the sequence
\[
M[\bss]^{2^{d}} \xra{\ C(M)\ } M[\bss]^{2^{d}} \xra{\ C(M)\ } M[\bss]^{2^{d}}
\]
of $k[\bss]$-modules. The desired statement is thus a consequence of Theorem~\ref{thm:closed}, and Proposition~\ref{pr:dmodule} applied with $S:=k[\bss]$ and $\delta:=C(M)$.
\end{proof}

\begin{remark}
Theorem~\ref{thm:equations} should be compared to \cite[Theorem~3.2]{AB} that describes equations defining $\mcV_{R}(M)$ in terms of data derived from a free resolution, say $F$, of $M$ over $P$, instead of the resolution of $k[\bss]$ over $\wt Q$. In detail: One can construct a system of higher homotopies, a family of endomorphism (not chain maps) of $F$ that encode the data that, because $M$ is an $R$-module, multiplication by $f_{1},\dots,f_{c}$ on $F$ is a commuting family of morphisms that are homotopic to zero. Then \cite[Theorem~3.2]{AB} expresses $\mcV_{\pi}(M)$ as the zero-set of appropriate minors of the matrix involving these higher homotopies.  In contrast, the $C(M)$ appearing in Theorem~\ref{thm:equations} is universal, in that it involves only a matrix factorization of $\wt f$, and matrices describing the action of the elements of $P$ on $M$, viewed as $k$-vector space.
\end{remark}

We finish by describing the free resolution of $k[\bss]$ over $\wt Q$ and, in particular, the matrices $A$ and $B$, when $R$ is a truncated power series ring; group algebras of abelian $p$-groups over a field of positive characteristic $p$ have this form.

\begin{example}
Let $k$ be a field, $P:=k[|t_{1},\dots,t_{d}|]$ the power series ring over $k$ in indeterminates $\bst:=t_{1},\dots,t_{d}$, and set
\[
R:=k[|t_{1},\dots,t_{d}|]/(t_{1}^{u_{1}}, \dots,t_{c}^{u_{c}})\,,
\]
where $u_{i}\ge 2$ for each $i$. 

Let $E$ be the Koszul complex over $\wt Q$ on $\bst$; thus $E=\wt Q\langle \bsx\mid \partial(x_{i})=t_{i}\rangle$.  The element $\sum_{i}s_{i}t_{i}^{u_{i}-1}x_{i}$ in $E_{1}$ is evidently a cycle.  Since $\bst$ is a regular sequence in $P[\bss]$ and $\wt f$ is a regular element contained in $(\bst)$, the complex
\[
E\big\langle y\mid \partial(y) = \sum_{i}s_{i}t_{i}^{u_{i}-1}x_{i}\big\rangle
\]
is a $\wt Q$-free resolution of $\wt Q/(\bst)$, that is to say, of $k[\bss]$; see Lemma~\ref{ssec:hyper}.  For any integer $n\ge c$ one has
\[
E\langle y\rangle_{n}:=
\begin{cases}
 \bigoplus_{i\ges 0} E_{2i} & \text{when $n$ is even}\\
 \bigoplus_{i\ges 0} E_{2i+1} & \text{when $n$ is odd}
\end{cases}
\]
In either parity, it is a finite free $\wt Q$-module of rank $2^{c-1}$. Moreover, after a suitable choice of bases, the differential on $E\langle y\rangle$ is given by 
\[
\partial_{\mathrm{even}}:= A \quad\text{and} \quad \partial_{\mathrm{odd}}:= B\,.
\]
with $A,B$ are square matrices  of size $2^{c-1}$, and coefficients in $\wt Q$, described below.

The rows and columns of $A$ and $B$ are indexed by subsets of the sequences 
\[
U:=\{\bsh=(h_{1},\dots,h_{c})\mid \bsh \in \{0,1\}^{c}\}
\]
ordered weighted lexicographically.

The matrix $A$ has rows indexed by sequences $\bsh$ in $U$ with  $\sum h_{i}$ odd and columns indexed by sequences $\bsj$ with $\sum j_{i}$ even. The entry in $A$ in position $(\bsh,\bsj)$ is  $0$ when  $\sum_{i} |j_{i}-h_{i}| \ne 1$; otherwise, there is a unique integer $p$ such that $|j_{p}-h_{p}|=1$, and setting $n=\sum_{i<p}j_{i}$, one has
\[
A_{\bsh,\bsj}:=
\begin{cases}
(-1)^{n}s_{p}t_{p}^{u_{p}-1} & \text{if $h_{p}=1$ and $j_{p}=0$}\\
(-1)^{n} t_{p} & \text{if $h_{p}=0$ and $j_{p}=1$}
\end{cases}
\]
The rows of $B$ are indexed by sequences $\bsh$ with  $\sum h_{i}$ even and the columns are indexed by sequences $\bsj$ with $\sum j_{i}$ odd. Its entries are defined exactly as for $A$.
\end{example}

\section{Group algebras of elementary abelian groups}
\label{sec:groups}
Let $k$ be a field of positive characteristic $p$, and $P:=k[\bss]$ the polynomial ring in indeterminates $\bss:=s_{1},\dots,s_{c}$. Set $I:=(s_{1}^{p},\dots, s_{c}^{p})$ and $R:=P/I$. The $k$-algebra $R$ is the group algebra of an elementary abelian $p$-group and, more generally, of a quasi-elementary finite group scheme~\cite{FP}. The ring $R$ admits a coproduct making it into a Hopf algebra, but this structure plays no role in what follows.

Set $\fn:=(\bss)$; this is a maximal ideal of $P$. Each $u \in \fn$ yields a homomorphism
\begin{align*}
&\eta_{u}\colon k[t]/(t^{p}) \lra R \quad\text{such that $\eta_{u}(t) = u \mod I$.} 
\intertext{For such a $u$ one has $u^{p}\in I$ and hence also a surjective homomorphism}
 &\pi_{u^{p}}\colon P/(u^{p})\lra R
\end{align*}
of $k$-algebras; see Section~\ref{sec:Hypersurfaces}. In what follows it is expedient to write $M\da{u}$ for $M$ viewed as a $k[t]/(t^{p})$-module via $\eta_{u}$. The proof of the next result extracts an argument from that for \cite[Theorem 3.2]{AB}, and is given after Lemma~\ref{lem:polynomial}.

\begin{theorem}
\label{thm:groups}
For any $R$-module $M$ one has 
\[
M\da{u} \text{is free} \iff \fdim_{P/(u^{p})} (M)  < \infty\,.
\]
\end{theorem}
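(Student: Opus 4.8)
The plan is to reduce both sides to the vanishing of $\Tor i{k[t]/(t^{p})}{M\da u}k$ for $i\gg0$. We may assume $u\neq0$. Set $S:=k[t]/(t^{p})$ and $\ov P:=P/(u^{p})$. Since $k$ has characteristic $p$, the Frobenius is additive, so $u^{p}$ is obtained from $u$ by replacing each monomial in $\bss$ by its $p$-th power; each such power lies in $I=(s_{1}^{p},\dots,s_{c}^{p})$, hence $u^{p}\in I$. Thus $R$ is a quotient of $\ov P$, and $\eta_{u}$ factors as $S\to\ov P\to R$ with $t\mapsto u$. As $R$ is Artinian local, every $R$-module $M$ is supported only at the maximal ideal $\fn\ov P$, so $\fdim_{\ov P}M=\fdim_{\ov P_{\fn}}M$, which for finitely generated $M$ equals $\pdim_{\ov P_{\fn}}M$; I would therefore prove that $M\da u$ is free over $S$ if and only if $\pdim_{\ov P_{\fn}}M<\infty$.

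First, over $S$, whose maximal ideal $(t)$ is nilpotent, Nakayama's lemma holds for all modules and $k=S/(t)$ has the $2$-periodic minimal free resolution $\cdots\xra{t}S\xra{t^{p-1}}S\xra{t}S$. Hence for every $S$-module $N$ the groups $\Tor iSNk$ are $2$-periodic for $i\geq1$, and $N$ is free $\iff\Tor1SNk=0\iff\Tor iSNk=0$ for $i\gg0$ (a surjection onto $N$ from a free module lifting a basis of $N/tN$ has kernel $K$ with $K/tK\cong\Tor1SNk$, and $K/tK=0$ forces $K=t^{p}K=0$). Next I would identify $\Tor iS{M\da u}k\cong\Tor i{\ov P}M{P/(u)}$ for all $i$, by displaying both as the homology of the complex $\cdots\xra{u^{p-1}}M\xra{u}M\xra{u^{p-1}}M\xra{u}M$, where $u$ acts through the $R$-structure: the left side comes from the resolution of $k$ over $S$ above, and the right side from the analogous periodic free resolution $\cdots\xra{\bar u^{p-1}}\ov P\xra{\bar u}\ov P$ of $P/(u)=\ov P/\bar u\ov P$ over $\ov P$, which is exact because $P$ is a domain and $u\neq0$, so $\operatorname{ann}_{\ov P}(\bar u^{\,j})=\bar u^{\,p-j}\ov P$ for $0\le j\le p$. (Conceptually this is change of rings along the flat---indeed free---map $S\to\ov P$, cf.\ Lemma~\ref{lem:polynomial}.) Localizing at $\fn$ gives $\Tor iS{M\da u}k\cong\Tor i{\ov P_{\fn}}M{P_{\fn}/(u)}$, so by the first part $M\da u$ is free over $S$ if and only if $\Tor i{\ov P_{\fn}}M{P_{\fn}/(u)}=0$ for $i\gg0$.

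The remaining step, which I expect to be the main obstacle, is to show that the latter holds if and only if $\pdim_{\ov P_{\fn}}M<\infty$; ``if'' is immediate. For ``only if'', observe that $\ov P_{\fn}=P_{\fn}/(u^{p})$ is a hypersurface and that the periodic resolution of $P_{\fn}/(u)$ over it is minimal---its differentials $\bar u$ and $\bar u^{\,p-1}$ have entries in the maximal ideal, since $u\in\fn$ and $p\geq2$---so $\pdim_{\ov P_{\fn}}(P_{\fn}/(u))=\infty$, i.e.\ $P_{\fn}/(u)$ has complexity one over $\ov P_{\fn}$. Over a hypersurface a support variety is a closed cone in the affine line, hence is all of it as soon as the module has infinite projective dimension; so $\var{\ov P_{\fn}}(P_{\fn}/(u))=\BA^{1}_{k}$, and by the theory of support varieties over complete intersections \cite{AB} the vanishing of $\Tor i{\ov P_{\fn}}M{P_{\fn}/(u)}$ for $i\gg0$ forces $\var{\ov P_{\fn}}(M)=\{0\}$, that is, $\pdim_{\ov P_{\fn}}M<\infty$. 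An alternative, closer in spirit to the companion Theorem~\ref{thm:equations}, is to take an eventually $2$-periodic resolution of $M$ over $\ov P_{\fn}$ coming from a matrix factorization $(\phi,\psi)$ of $u^{p}$ \cite{Ei}, reduce it modulo $u$, and apply Proposition~\ref{pr:dmodule} to the resulting square-zero $P_{\fn}/(u)$-linear endomorphism assembled from $\phi$ and $\psi$ to conclude that the periodic part is trivial; this route needs $P_{\fn}/(u)$ regular, hence $u\notin\fn^{2}$, with the case $u\in\fn^{2}$---where both conditions reduce to $M=0$---treated separately. Either way, chaining the three steps proves the theorem. The essential difficulty is precisely this last implication: it uses the hypersurface structure of $\ov P_{\fn}$ together with the fact that $P_{\fn}/(u)$ itself has infinite projective dimension over $\ov P_{\fn}$, whereas everything before it is formal.
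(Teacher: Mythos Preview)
Your Tor identification $\Tor iS{M\da u}k\cong\Tor i{\ov P}{M}{P/(u)}$, obtained by writing both sides as the homology of the periodic complex $\cdots\xra{u}M\xra{u^{p-1}}M\xra{u}M$, is correct and holds for every $R$-module $M$; so is the reduction of freeness of $M\da u$ to the vanishing of these groups. The genuine gap is in the last step: to pass from $\Tor i{\ov P_\fn}{M}{P_\fn/(u)}=0$ for $i\gg0$ to $\pdim_{\ov P_\fn}M<\infty$ you invoke the support-variety machinery of \cite{AB}, and that theory is set up for \emph{finitely generated} modules. You flag this yourself (``which for finitely generated $M$ equals $\pdim_{\ov P_\fn}M$''), but the theorem is stated for arbitrary $M$, and neither your main route nor your matrix-factorization alternative extends without further work---Eisenbud's periodicity in \cite{Ei} is likewise a finitely-generated statement.

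The paper's argument is shorter and handles all $M$ at once. It splits on whether $u\in\fn^2$. For $u\notin\fn^2$ the map $S\to\ov P$ sending $t\mapsto u$ is a polynomial extension, so Lemma~\ref{lem:polynomial} gives $\fdim_S(M\da u)<\infty\iff\fdim_{\ov P}(M)<\infty$ directly; since $S$ is artinian local the left side is exactly freeness. You actually cite Lemma~\ref{lem:polynomial}, but only as heuristic backing for the Tor identification rather than as the engine of the proof---had you applied it to flat dimensions you would have finished this case for arbitrary $M$ without touching \cite{AB}. For $u\in\fn^2$ the paper argues, as you also assert in a parenthetical, that both conditions are equivalent to $M=0$, using Theorem~\ref{thm:hypersurfaces}(3) for the flat-dimension side. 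Your support-variety route has the attractive feature of treating both cases uniformly, but the price is the finite-generation hypothesis.
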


In the equivalence above, the invariant on the right is the flat dimension.

\subsection{Flat dimension}
\label{ss:fdim}
Let $Q$ be a commutative noetherian ring and $N$ a $Q$-module. Recall that $\fdim_{Q}(N)$, the \emph{flat dimension} of $N$, is the length of the shortest flat resolution of $N$ over $Q$. Evidently $\fdim_{Q}(N)\le \pdim_{Q}(N)$; equality holds when $N$ is finitely generated. 

The $Q$-module $N$ is \emph{$J$-torsion}, for some ideal $J$ in $Q$, if for each $u \in N$ there is an integer $l$ such that $J^{l} u=0$. If $N$ is $\fn$-torsion for a maximal ideal $\fn$ of $Q$, then
\begin{equation}
\label{eq:fdim}
\fdim_{Q}N <\infty \iff \Tor iQ{Q/\fn}N=0 \quad \text{for $i\gg 0$}\,.
\end{equation}
This follows form \cite[Propositions 5.3.F]{AF}, keeping in mind that for any prime ideal $\fq$ of $Q$, when $\fq\ne \fn$, one has $\Tor{}Q{(Q/\fq)_{\fq}}{N_{\fq}}=0$.

We  record a few consequences of Theorem~\ref{thm:groups}. 

\begin{corollary}
If an element $w\in \fn$ satisfies $u-w\in\fn^{2}$, then $M\da{w}$ is free if and only if is $M\da{u}$.
\end{corollary}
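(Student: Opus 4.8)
The plan is to run the statement through Theorem~\ref{thm:groups}, the criterion~\eqref{eq:fdim}, and Theorem~\ref{thm:hypersurfaces}(2). By Theorem~\ref{thm:groups}, applied to $u$ and to $w$, the assertion is equivalent to the statement that $\fdim_{P/(u^p)}(M)$ is finite if and only if $\fdim_{P/(w^p)}(M)$ is. To rephrase each of these conditions in terms of Tor, I would apply~\eqref{eq:fdim} to the ring $P/(u^p)$ and its maximal ideal $\fn/(u^p)$; this is legitimate because $M$, regarded as a $P/(u^p)$-module via the surjection $P/(u^p)\thra R$, is annihilated by some power of $\fn/(u^p)$---indeed $R$ is artinian local with maximal ideal $\fn/I$, since $\fn^{c(p-1)+1}\subseteq I$. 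Hence~\eqref{eq:fdim}, together with its analogue for $w$, reduces the problem to constructing an isomorphism
\[
\Tor{}{P/(u^p)}kM \cong \Tor{}{P/(w^p)}kM
\]
of graded $k$-vector spaces.

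The key point is that $u^p-w^p$ lies in $\fn I$. Since $\chr k=p$, the Frobenius endomorphism of $P$ gives $u^p-w^p=(u-w)^p$. As $u-w\in\fn^2=(\bss)^2$, one can write $u-w=\sum_{1\le i\le j\le c}g_{ij}s_is_j$ with $g_{ij}\in P$, and a second application of Frobenius then yields $(u-w)^p=\sum_{i\le j}g_{ij}^{p}s_i^{p}s_j^{p}$. Each summand lies in $I^{2}\subseteq\fn I$, because $s_i^{p},s_j^{p}\in I$ and $I\subseteq\fn$; thus $u^p-w^p\in\fn I$. (The same computation applied to $u$ itself recovers $u^p\in I$; since $u^p$ and $w^p$ are moreover nonzero in the domain $P$, they are regular elements of $I$, so $P/(u^p)$ and $P/(w^p)$ are indeed the hypersurfaces figuring in Theorems~\ref{thm:hypersurfaces} and~\ref{thm:groups}.)

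Finally I would invoke Theorem~\ref{thm:hypersurfaces}(2) with $P$, the maximal ideal $\fn$, the ideal $I$, and the regular elements $f:=u^p$ and $g:=w^p$ of $I$, whose difference $f-g$ lies in $\fn I$; this delivers the isomorphism of Tor displayed above, and the proof is complete. I do not foresee a genuine obstacle: the only point needing care is checking the hypotheses of~\eqref{eq:fdim}---which is where the artinian-local nature of $R$ is used---whereas the characteristic-$p$ identity $u^p-w^p=(u-w)^p\in\fn I$ is elementary, though it is the structural reason behind the corollary and so deserves to be spelled out.
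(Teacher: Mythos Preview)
Your proof is correct and follows exactly the route the paper takes: reduce via Theorem~\ref{thm:groups} and \eqref{eq:fdim} to comparing $\Tor{}{P/(u^p)}kM$ with $\Tor{}{P/(w^p)}kM$, then invoke Theorem~\ref{thm:hypersurfaces}(2). The paper's proof is terser and leaves implicit the key computation $u^p-w^p=(u-w)^p\in I^2\subseteq\fn I$, which you spell out via the Frobenius; that is a welcome clarification.
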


\begin{proof}
Evidently $\fn^{l}$ annihilates $R$, and hence  $M$, for $l\gg 0$; thus $M$ is $\fn$-torsion. Given \eqref{eq:fdim}, the desired result follows from Theorems~\ref{thm:hypersurfaces}(2) and \ref{thm:groups}.
\end{proof}

\subsection{Rank Varieties}
In this paragraph we assume that $k$ is algebraically closed. Let $\BA_{k}^{c}$ be the affine space over $k$, of dimension $c$. We write $\bsa$ for a point $(a_{1},\dots,a_{c})$ in this space. For each $R$-module $M$ set
\[
\rvar R(M):=\left\{\bsa\in \BA_{k}^{c} \left| M\da{u} \text{is not free for $u=\sum_{i}a_{i}s_{i}$}\right.\right\}\cup\{0\}\,.
\]
This is called the \emph{rank variety} of $M$. It was introduced by Carlson who also proved that it is a closed subset of $\BA_{k}^{c}$; see~\cite[\S4]{Ca}.  The result below, which relates the rank variety of $M$ to its homological support set in the sense of \ref{ss:support}, is contained in  \cite[Theorem~7.5]{Av:vpd}. The argument in \emph{op.~cit.} gets to the stated bijection via the cohomological variety of $M$. Our proof, using  Theorems~\ref{thm:hypersurfaces} and \ref{thm:groups}, is more direct.

\begin{corollary}
The map  $\BA_{k}^{c}\to \BA_{k}^{c}$ sending $(a_{1},\dots,a_{c})$ to $(a^{p}_{1},\dots,a^{p}_{c})$ is bijective and 
for each $R$-module $M$ it restricts to a bijection
\[
\rvar R(M) \xra{\ \cong\ } \var R(M)\,.
\]
\end{corollary}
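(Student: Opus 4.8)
The plan is to deduce the statement from Theorem~\ref{thm:hypersurfaces}(2) and Theorem~\ref{thm:groups}, together with the flat-dimension criterion~\eqref{eq:fdim}, by identifying the two sides as the same datum transported along the coordinatewise $p$-th power map. First I would record that $\phi\colon\BA^{c}_{k}\to\BA^{c}_{k}$, $(a_{1},\dots,a_{c})\mapsto(a_{1}^{p},\dots,a_{c}^{p})$, is bijective: since $k$ is algebraically closed it is perfect, so the Frobenius $a\mapsto a^{p}$ on $k$ is injective (from $a^{p}=b^{p}$ one gets $(a-b)^{p}=0$) and surjective ($X^{p}-a$ has a root), hence so is the product map $\phi$.

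Next I would set up the dictionary. Fix $\bsa\in\BA^{c}_{k}$, put $u:=\sum_{i}a_{i}s_{i}\in\fn$ and $\bsb:=\phi(\bsa)$. Because $\chr k=p$, one has
\[
u^{p}=\bigl(\textstyle\sum_{i}a_{i}s_{i}\bigr)^{p}=\sum_{i}a_{i}^{p}s_{i}^{p}=\sum_{i}b_{i}s_{i}^{p}\in I\,,
\]
so, under the identification of $I/\fn I$ with $k^{c}$ via the basis $[s_{1}^{p}],\dots,[s_{c}^{p}]$ that presents $\var R(M)=\mcV_{\pi}(M)$ as a subset of $k^{c}$, the class of $u^{p}$ equals $\bsb$. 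When $\bsa\ne 0$ the linear form $u$ is a nonzero element of the domain $P$, so $u^{p}$ is a $P$-regular element of $I$ with $[u^{p}]=\bsb$; and $\phi(0)=0$ is the common vertex of the cones $\rvar R(M)$ and $\var R(M)$.

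The core of the argument is the chain of equivalences, valid for $\bsa\ne 0$,
\[
\bsa\in\rvar R(M)\iff M\da{u}\text{ is not free}\iff\fdim_{P/(u^{p})}M=\infty\iff\bsb\in\var R(M)\,.
\]
The first equivalence is the definition of $\rvar R(M)$ off the vertex, and the second is Theorem~\ref{thm:groups}. For the third, set $Q:=P/(u^{p})$; since $u^{p}\in\fn$, the image of $\fn$ is a maximal ideal of $Q$ with $Q/\fn Q=P/\fn=k$, and $M$ is $\fn$-torsion over $Q$ because $\fn^{c(p-1)+1}\subseteq I$ annihilates $R$ and hence $M$, so~\eqref{eq:fdim} turns the condition $\fdim_{Q}M=\infty$ into the statement that $\Tor i{Q}kM\ne 0$ for infinitely many $i$. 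Finally, as $u^{p}$ is a regular element of $I$ with $[u^{p}]=\bsb$, Theorem~\ref{thm:hypersurfaces}(2) says this infinite non-vanishing holds for $u^{p}$ exactly when it holds for some (equivalently every) regular $g\in I$ with $[g]=\bsb$, which is precisely the condition $\bsb\in\mcV_{\pi}(M)=\var R(M)$.

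Combining this with $\phi(0)=0$ gives $\rvar R(M)=\phi^{-1}(\var R(M))$, so the bijection $\phi$ restricts to a bijection $\rvar R(M)\xra{\ \cong\ }\var R(M)$. Since the real content is supplied by Theorems~\ref{thm:hypersurfaces}(2) and~\ref{thm:groups} and by~\eqref{eq:fdim}, I do not expect a genuine obstacle; the only points needing care are the bookkeeping—that $\var R(M)$ is read inside $k^{c}$ through the basis $\{[s_{i}^{p}]\}$, that $M$ is not assumed finitely generated (so one must use the $\Tor$-vanishing description of $\mcV_{\pi}(M)$ rather than projective dimension), and that $M$ is $\fn$-torsion over each hypersurface ring $P/(u^{p})$ so that~\eqref{eq:fdim} applies.
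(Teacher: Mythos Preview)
Your proof is correct and follows essentially the same approach as the paper. The paper's proof is a single sentence—``The assertion about the varieties associated to $M$ is a translation of Theorem~\ref{thm:groups}''—and your argument is precisely the detailed unpacking of that translation, making explicit the use of~\eqref{eq:fdim} to pass from flat dimension to the Tor criterion and of Theorem~\ref{thm:hypersurfaces}(2) (already invoked right after the definition of $\mcV_{\pi}(M)$ in \S\ref{ss:support}) to justify testing membership with the particular regular lift $u^{p}$.
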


\begin{proof}
The given map is a bijection on $\BA_{k}^{c}$ because $k$ is algebraically closed. The assertion about the varieties associated to $M$ is a translation of Theorem~\ref{thm:groups}.
\end{proof}

A classical observation---see~\cite[Part III]{Kap}---will be used in the proof of Theorem~\ref{thm:groups}. It is a special case of a statement that holds for  smooth morphisms.

\subsection{Polynomial extensions}
  \label{ssec:polynomial}
Let $S$ be a commutative ring, $S[\bsx]$ a polynomial ring in $d$ indeterminates $\bsx$, and $M$ a $S[\bsx]$-module.

   \begin{lemma}
  \label{lem:polynomial}
The following inequalities hold:
\[
\fdim_{S} M \le \fdim_{S[\bsx]} M \le \fdim_{S} M + d\,.
\]
\end{lemma}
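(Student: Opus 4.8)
The plan is to prove the two inequalities separately, in each case reducing to the case of a single indeterminate $x$ by an obvious induction on $d$, since $S[x_1,\dots,x_d] = S[x_1,\dots,x_{d-1}][x_d]$.

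For the left-hand inequality $\fdim_S M \le \fdim_{S[\bsx]} M$: a flat $S[\bsx]$-module is flat over $S$, because $S[\bsx]$ is free, hence flat, as an $S$-module, and a composite of flat base-change functors is flat. Thus any flat resolution of $M$ over $S[\bsx]$ is also a flat resolution over $S$, which immediately gives $\fdim_S M \le \fdim_{S[\bsx]} M$ with no restriction to one variable even needed.

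For the right-hand inequality $\fdim_{S[\bsx]} M \le \fdim_S M + d$: reducing to $d=1$, write $S' := S[x]$ and let $n := \fdim_S M$, which we may assume finite. The key device is the standard short exact sequence of $S'$-modules
\[
0 \lra S'\otimes_S M \xra{\ x\otimes 1 - 1\otimes x\ } S'\otimes_S M \lra M \lra 0\,,
\]
where the first map is multiplication by $x$ on the left factor minus the $S'$-action coming from $M$ (this is exactly the presentation used to compute $\tor^{S'}(S,M)$ via a Koszul-type resolution, and it is exact because $x\otimes 1 - 1\otimes x$ is a nonzerodivisor on $S'\otimes_S M$ with the displayed cokernel). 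Now $S'\otimes_S M$ has flat dimension at most $n$ over $S'$: if $F\to M$ is a flat resolution over $S$ of length $n$, then $S'\otimes_S F \to S'\otimes_S M$ is a flat resolution over $S'$ of length $n$, since $S'$ is flat over $S$ and $S'\otimes_S(\text{flat }S\text{-module})$ is a flat $S'$-module. Given a short exact sequence $0\to N'\to N\to M\to 0$ with $\fdim_{S'}N, \fdim_{S'}N' \le n$, the long exact sequence for $\tor^{S'}(-, L)$ forces $\tor^{S'}_i(M,L)=0$ for $i \ge n+2$ and all $L$, hence $\fdim_{S'}M \le n+1$. This completes the $d=1$ case, and the induction then yields the bound $\fdim_S M + d$.

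I do not expect a genuine obstacle here; the only point requiring a little care is justifying exactness of the displayed sequence and the claim that $S'\otimes_S(\text{flat})$ is $S'$-flat, both of which are routine (flatness of base change, and the fact that $x\otimes 1 - 1\otimes x$ acts as a monomorphism on $S'\otimes_S M \cong M[x]$ since it sends a polynomial of degree $e$ in $x$ to one of degree $e+1$ with the same leading behavior). If one prefers to avoid the explicit sequence, an alternative is to invoke the change-of-rings spectral sequence or the standard inequality $\fdim_{S'}M \le \fdim_{S'}(S'\otimes_S M) + \fdim_{S'}S' $ is \emph{not} quite what is wanted — so the cleanest route really is the two-term resolution of $M$ over $S'$ by copies of $S'\otimes_S M$ exhibited above, and that is the argument I would write up.
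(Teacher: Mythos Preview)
Your proof is correct and follows essentially the same approach as the paper: the left inequality via flatness of $S[\bsx]$ over $S$, and the right inequality via induction on $d$ and the two-term $S[x]$-resolution $0\to S[x]\otimes_S M \to S[x]\otimes_S M \to M \to 0$ given by $x\otimes 1 - 1\otimes x$. The paper justifies exactness of that sequence by noting that the corresponding sequence of $S[x]\otimes_S S[x]$-modules splits over the right $S[x]$-factor before tensoring with $M$, while you argue injectivity directly by a leading-term argument in $M[x]$; both are fine.
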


  \begin{proof}
The inequality on the left holds because any flat resolution over $S[\bsx]$ is also a flat resolution over $S$,
due to the freeness of $S[\bsx]$ as a $S$-module.

For the inequality on the right, we may assume that $\fdim_{S} M$ is finite.  Induction on $d$ then shows that it suffices 
to deal with the case when $S[x]$ is a polynomial ring in a single indeterminate. In the ring  $S[x]\otimes_{S}S[x]$, the element 
$x\otimes 1 - 1\otimes x$ is regular, so we have an exact sequence of $S[x]\otimes_{S}S[x]$-modules 
\[
0\lra (S[x]\otimes_{S}S[x]) \xra{\ x\otimes 1 - 1\otimes x\ }  (S[x]\otimes_{S}S[x]) \lra S[x] \to 0
\]
It splits as a sequence of right $S[x]$-modules, so $(-\otimes_{S[x]}M)$ yields an exact sequence
\[
0\lra (S[x]\otimes_{S}M) \xra{\ x\otimes 1 - 1\otimes x\ }  (S[x]\otimes_{S}M) \lra M \to 0
\]
of $S[x]$-modules, with $S[x]$ acting from the left. The desired inequality follows.
  \end{proof}

\begin{proof}[Proof of Theorem~\ref{thm:groups}]
Assume $u$ is in $\fn^{2}$. Then $\eta_{u}(t^{\lceil \frac{p+1}2\rceil})=0$ so no $R$-module is free as an $k[t]/(t^{p})$ module. Moreover, since $(a+b)^{p}=a^{p}+b^{p}$ for all $a,b\in P$,  one has  $u^{p}\in \fn I$. Thus Theorem~\ref{thm:hypersurfaces}(3) yields $\fdim_{P/(u^{p})} (M)=\infty$. 

In the rest of the proof we assume $u$ lies in $\fn\setminus \fn^{2}$. Consider the commutative diagram of homomorphisms of $k$-algebras
\[
\xymatrixcolsep{1.5pc}
\xymatrixrowsep{1.2pc}
\xymatrix{
	&\frac{k[\bss]}{(u^{p})}  \ar@{->>}^{\pi_{u^{p}}}[dr]&   \\
\frac{k[t]}{(t^{p})} \ar@{->}[ur]^-{t\mapsto u} \ar@{->}[rr]^-{\ \eta_{u} \ }& & R	}
\]
The condition on $u$ implies that the map $k[t]/(t^{p})\lra k[\bss]/(u^{p})$ is a polynomial extension. Thus, from Lemma \ref{lem:polynomial} one gets the first equivalence below:
\[
\begin{aligned}
\fdim_{P/(u^{p})} (M) <\infty  
	&\iff \fdim(M\da{u})<\infty \\  	
	&\iff M\da{u } \text{ is free}
\end{aligned}
\]
The second equivalence holds because $k[t]/(t^{p})$ is artinian.
\end{proof}

The preceding results can be reinterpreted as statements about $p$-nilpotent operators on $k$-vector spaces.

\subsection{Nilpotent operators}
\label{sec:nilpotent}
Let $V$ be a $k$-vector space and $\alpha\colon V\to V$ a $k$-linear map such that $\alpha^{n}=0\ne\alpha^{n-1}$ holds for some integer $n\ge1$.  Evidently, one then has $\alpha V\subseteq \Ker(\alpha^{n-1})$; when equality holds we say that  $\alpha$ has \emph{maximal image}. We need an elementary lemma concerning the standard correspondence between linear operators  $\alpha\colon V\to V$ of nilpotency degree at most $n$ and $k[t]/(t^{n})$-module structures on $V$. The proof is straightforward linear algebra, and so is omitted.

\begin{lemma}
\label{lem:nilpotent}
For a $k$-linear map $\alpha\colon V\to V$ with $\alpha^{n}=0\ne\alpha^{n-1}$ for some integer $n\ge1$, 
the following conditions are equivalent.
  \begin{enumerate}[\quad\rm(1)]
    \item
$\alpha V= \Ker(\alpha^{n-1})$.
    \item
$\alpha^{n-1}V= \Ker(\alpha)$. 
    \item
$\alpha^i V= \Ker(\alpha^{n-i})$ for $i=1,\dots,n-1$.
    \item
$V$ is free as a $k[x]/(x^n)$-module. \qed
  \end{enumerate}
\end{lemma}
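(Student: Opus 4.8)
The plan is to recast the statement in module-theoretic language and then reduce it to bookkeeping. View $V$ as a module over the Artinian ring $N:=k[x]/(x^{n})$, with $x$ acting as $\alpha$; condition~(4) then says precisely that this module is free. The only structural input used throughout is the trivial inclusion $\alpha^{j}V\subseteq\Ker(\alpha^{n-j})$, valid because $\alpha^{n-j}\alpha^{j}=\alpha^{n}=0$; thus each of (1), (2), (3) asserts the reverse inclusion. I would prove the cycle of implications (4)$\Rightarrow$(3)$\Rightarrow$(2)$\Rightarrow$(1)$\Rightarrow$(4). The case $n=1$ is trivial (then $\alpha=0$ and all four conditions hold), so assume $n\ge2$. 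The implication (4)$\Rightarrow$(3) is a direct computation with powers of multiplication by $x$ on a free $N$-module, using that $x^{n-i}f=0$ in $k[x]/(x^{n})$ iff $f\in(x^{i})$; and (3)$\Rightarrow$(2) is just the case $i=n-1$ of (3).

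For (2)$\Rightarrow$(1), assume $\alpha^{n-1}V=\Ker(\alpha)$ and let $v$ satisfy $\alpha^{n-1}v=0$; I would show $v\in\alpha V$ by successive approximation. Since $\alpha(\alpha^{n-2}v)=0$, the element $\alpha^{n-2}v$ lies in $\Ker(\alpha)=\alpha^{n-1}V$, say $\alpha^{n-2}v=\alpha^{n-1}w_{1}$; then $v-\alpha w_{1}\in\Ker(\alpha^{n-2})$. Iterating this peeling-off step produces elements $w_{1},\dots,w_{n-1}$ with $v-\sum_{j=1}^{n-1}\alpha^{j}w_{j}\in\Ker(\alpha)=\alpha^{n-1}V$, after which absorbing the last term gives $v=\alpha\big(\sum_{j=1}^{n-1}\alpha^{j-1}w_{j}\big)\in\alpha V$, as required.

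The crux is (1)$\Rightarrow$(4). Assuming $\alpha V=\Ker(\alpha^{n-1})$, choose a $k$-vector-space complement $W$ of $\alpha V$ in $V$, so $V=W\oplus\alpha V$. Because $\Ker(\alpha^{n-1})=\alpha V$ meets $W$ trivially, $\alpha^{n-1}|_{W}$ is injective, hence so is $\alpha^{i}|_{W}$ for every $i\le n-1$. Iterating the identity $V=W+\alpha V$ and using $\alpha^{n}=0$ yields $V=\sum_{i=0}^{n-1}\alpha^{i}W$, and I would check this sum is direct: from a relation $\sum_{i=0}^{n-1}\alpha^{i}w_{i}=0$, applying $\alpha^{n-1}$ kills all terms but $\alpha^{n-1}w_{0}$, forcing $w_{0}=0$, and one repeats with $\alpha^{n-2},\alpha^{n-3},\dots$. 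Hence the $N$-linear map $W\otimes_{k}N\to V$ sending $w\otimes x^{i}$ to $\alpha^{i}w$ is bijective, so $V$ is $N$-free, which is~(4). I expect the only delicate point to be keeping the directness of $\bigoplus_{i}\alpha^{i}W$ straight (and, for infinite-dimensional $V$, that $\alpha V$ admits a vector-space complement, which is standard); the rest is the promised straightforward linear algebra. Alternatively, one could bypass the cycle by quoting that every module over the Artinian principal ideal ring $N$ is a direct sum of cyclic modules $k[x]/(x^{d})$ with $1\le d\le n$, and checking each of (1)--(4) one summand at a time: on a summand with $d<n$ one has $\alpha^{n-1}=0$, so (1) and (2) (hence (3)) fail there, whereas on $k[x]/(x^{n})$ all the conditions hold --- so any one of (1), (2), (3) holds exactly when no summand with $d<n$ occurs, i.e.\ exactly when (4) holds.
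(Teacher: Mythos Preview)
Your argument is correct. The paper does not actually give a proof of this lemma---it merely states that ``the proof is straightforward linear algebra, and so is omitted''---so there is nothing to compare against; your cycle $(4)\Rightarrow(3)\Rightarrow(2)\Rightarrow(1)\Rightarrow(4)$ and the alternative via the structure theorem for modules over $k[x]/(x^n)$ are both valid ways to fill in the omitted details.
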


The next theorem is due to Friedlander and Pevtsova~\cite[Proposition~2.2]{FP}, extending work of  Carlson~\cite[Lemma~6.4]{Ca} and  Bendel,  Friedlander, and Suslin~\cite[Lemma~6.4]{SFB}. It is a key ingredient in the theory of $\pi$-points for finite groups schemes, which subsumes the theory of rank varieties for elementary abelian $p$-groups. 

\begin{theorem}
\label{thm:operators}
Let $k$ be a field of characteristic $p>0$, let $V$ be a $k$-vector space, and let $\alpha,\beta,\gamma\colon V\to V$ 
be commuting $k$-linear maps that satisfy
\[
\alpha^{p} = 0 \quad\text{and}\quad \beta^{p} = 0\,.
\]
Then $\alpha$ has maximal image if and only if $\alpha + \beta\gamma$ has maximal image. 
\end{theorem}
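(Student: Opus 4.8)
The plan is to translate the statement about operators on $V$ into the language of Theorem~\ref{thm:groups}, and then invoke that theorem. The starting point is the standard dictionary recalled in \ref{sec:nilpotent}: commuting $k$-linear endomorphisms $\alpha,\beta,\gamma$ of $V$ with $\alpha^{p}=\beta^{p}=0$ equip $V$ with the structure of a module over the commutative $k$-algebra $k[a,b,c]/(a^{p},b^{p})$, where $a,b,c$ act as $\alpha,\beta,\gamma$ respectively. To fit the hypotheses of \ref{sec:groups} we should not impose any relation on $\gamma$; so take $P:=k[a,b]$, the polynomial ring in two indeterminates, set $I:=(a^{p},b^{p})$ and $R:=P/I = k[a,b]/(a^{p},b^{p})$, which is exactly the group algebra of an elementary abelian $p$-group of rank $2$ treated in \ref{sec:groups}. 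But $\gamma$ does not come from an element of $R$. The fix is to extend scalars: work over the polynomial ring $R[c]$ and view $V$ as an $R[c]$-module, or equivalently apply Theorem~\ref{thm:groups} to the ground ring $k'=k(c)$ or, more carefully, enlarge $P$ to $k[a,b,c]$. The cleanest choice is to set $P:=k[a,b,c]$, $I:=(a^{p},b^{p})$, $R:=P/I=k[a,b,c]/(a^{p},b^{p})$, $\fn:=(a,b,c)$; then $V$ becomes an $R$-module via the given operators, the elements $u:=a$ and $u':=a+bc$ both lie in $\fn$ (in fact in $\fn\setminus\fn^{2}$, since $a$ and $a+bc$ have nonzero linear part), and $\eta_{u},\eta_{u'}\colon k[t]/(t^{p})\to R$ are the maps sending $t$ to $u$, $u'$.

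The key identification is then: $\alpha$ has maximal image $\iff$ $V\da{u}$ is free as a $k[t]/(t^{p})$-module, and likewise $\alpha+\beta\gamma$ has maximal image $\iff$ $V\da{u'}$ is free. This is precisely Lemma~\ref{lem:nilpotent}: an operator $\delta$ on $V$ with $\delta^{p}=0$ has maximal image (i.e. $\delta V=\Ker(\delta^{p-1})$) exactly when $V$ is free over $k[t]/(t^{p})$ with $t$ acting as $\delta$; here $u$ acts as $\alpha$ and $u'=a+bc$ acts as $\alpha+\beta\gamma$, and $(\alpha+\beta\gamma)^{p}=\alpha^{p}+(\beta\gamma)^{p}=0$ because $\alpha,\beta,\gamma$ commute, $\beta^{p}=0$, and $p$ is the characteristic, so $u'$ indeed has $p$-th power in $I$ and $\eta_{u'}$ is well defined. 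One small caveat: Lemma~\ref{lem:nilpotent} as stated assumes $\alpha^{n-1}\ne 0$; if $\alpha^{p-1}=0$ then $\alpha$ has maximal image iff $\alpha=0$ (image is $0=\Ker(\alpha^{p-1})=V$ forces $V=0$), and one handles these degenerate cases by hand — or, more uniformly, simply note that freeness of $V\da{u}$ over $k[t]/(t^{p})$ is equivalent to $u^{p-1}V=\Ker(u)$ regardless, which is the content one actually uses.

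Granting the dictionary, the proof is a two-line invocation of Theorem~\ref{thm:groups} applied to the $R$-module $V$: it gives
\[
V\da{u}\ \text{free}\iff \fdim_{P/(u^{p})}(V)<\infty\,,\qquad
V\da{u'}\ \text{free}\iff \fdim_{P/(u'^{p})}(V)<\infty\,,
\]
so it remains to show $\fdim_{P/(u^{p})}(V)<\infty\iff\fdim_{P/(u'^{p})}(V)<\infty$. For this one wants Theorem~\ref{thm:hypersurfaces}(2): the two elements $u^{p}=a^{p}$ and $u'^{p}=a^{p}+b^{p}c^{p}$ both lie in $I=(a^{p},b^{p})$, both are $P$-regular in $P=k[a,b,c]$, and their difference $u'^{p}-u^{p}=b^{p}c^{p}$ lies in $\fn I$ (indeed $b^{p}c^{p}=c^{p}\cdot b^{p}\in \fn I$ since $c^{p}\in\fn$). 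Hence $\Tor{i}{P/(u^{p})}{k}{V}\cong\Tor{i}{P/(u'^{p})}{k}{V}$ for all $i$, and since $V$ is $\fn$-torsion (as $\fn^{N}$ annihilates $R$ and hence $V$ for $N\gg0$), the finite-flat-dimension criterion \eqref{eq:fdim} shows the two flat dimensions are simultaneously finite. Combining the three displayed equivalences gives $\alpha$ has maximal image $\iff$ $\alpha+\beta\gamma$ has maximal image.

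The main obstacle is bookkeeping rather than depth: one must be careful that (a) $V$ need not be finitely generated over $R$, so we genuinely need the flat-dimension (not projective-dimension) versions of everything — these are exactly what \ref{ss:fdim} and Theorem~\ref{thm:groups} supply; (b) the element $c$ (i.e. $\gamma$) must be accommodated by enlarging $P$ to include a polynomial variable $c$, and one should check that adding this variable does not disturb regularity of $u^{p}$ and $u'^{p}$ in $P$ nor the containment $u'^{p}-u^{p}\in\fn I$ — both are immediate; and (c) the degenerate cases in Lemma~\ref{lem:nilpotent} where some power vanishes prematurely, handled as indicated above. No step requires anything beyond the results already established in the excerpt.
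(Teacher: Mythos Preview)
Your proof follows the paper's approach exactly: set $P=k[a,b,c]$, $R=P/(a^{p},b^{p})$, use Lemma~\ref{lem:nilpotent} to translate ``maximal image'' into freeness of $V\da{u}$ for $u=a$ and $u=a+bc$, apply Theorem~\ref{thm:groups} to pass to finiteness of $\fdim_{P/(u^{p})}V$, and then link the two flat dimensions via Theorem~\ref{thm:hypersurfaces}(2) and \eqref{eq:fdim}, using that $(a+bc)^{p}-a^{p}=b^{p}c^{p}\in\fn I$. This is the paper's argument essentially verbatim.

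There is one slip in your write-up. You justify invoking \eqref{eq:fdim} by asserting that ``$\fn^{N}$ annihilates $R$ and hence $V$ for $N\gg0$''. With $R=k[a,b,c]/(a^{p},b^{p})$ and $\fn=(a,b,c)$ this is false: the element $c$ is not nilpotent in $R$, so no power of $\fn$ kills $R$, and $V$ need not be $\fn$-torsion (take $\gamma=\idmap_{V}$, for instance). The paper also cites \eqref{eq:fdim} at exactly this point without checking its hypothesis, so your argument is no more and no less complete than the published one; but the particular sentence you wrote to justify the step is incorrect and should be dropped.
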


\begin{proof}
Form the $k$-algebras
\[
P:=k[a,b,c] \quad\text{and}\quad R:= \frac{P}{(a^{p},b^{p})}
\]
Due to the hypotheses one has an $R$-module structure on $V$, with $a,b$, and $c$ acting on $V$ via $\alpha,\beta$, and $\gamma$ respectively. Consider the homomorphism of $k$-algebras
\[
\begin{gathered}
\xymatrixcolsep{2pc}
\xymatrix{
\frac{k[t]}{(t^{p})} \ar@<.5ex>[r]^{\sigma_{a}} \ar@<-.5ex>[r]_{\sigma_{a+bc}} & R
}
\end{gathered}
\quad \text{where $\sigma_{a}(t) = a$ and $\sigma_{a+bc}(t) = a+bc$}.
\]
In view of Lemma~\ref{sec:nilpotent} the desired result is that the module $V\da{a}$ is free if and only if so is $V\da{a+bc}$. This follows from the chain of equivalences below where the first and the last ones hold by Theorem~\ref{thm:groups}.
\begin{align*}
V\da{a} \text{is free} 
	&\iff \fdim_{P/(a^{p})} (V) < \infty \\
	&\iff \fdim_{P/(a^{p}+b^{p}c^{p})} (V)  < \infty \\
	&\iff V\da{a+bc} \text{is free} 
\end{align*}
In view of \eqref{eq:fdim}, the one in the middle is given by Theorem~\ref{ssec:hyper}(2), since $b^{p}c^{p}$ lies in $(a,b,c)(a^{p},b^{p})$.
\end{proof}

\end{document}